\newtheorem{lemma}{Lemma}
 \newtheorem{theorem}{Theorem}
 \newtheorem{proposition}{Proposition}
 \newtheorem{corollary}{Corollary}
 \newtheorem{remark}{Remark}
 \theoremstyle{example}
\newtheorem{example}{Example}
\begin{document}
 
 \title{Complex multiplicative calculus}
 \author[A. Bashirov]{Agamirza E. Bashirov}
 \author[M. Riza]{Mustafa Riza}
 \email{mustafa.riza@emu.edu.tr}
 
 \address{
 Department of Mathematics, Eastern Mediterranean University, Gazima\u{g}usa, North Cyprus. , phone: +903926301006, FAX: +903923651604}
 \keywords{Complex calculus, complex differentiation, complex integral, multiplicative calculus, Cauchy--Riemann conditions, fundamental theorems of calculus.}
 
\subjclass[2000]{30A99}
 
 \begin{abstract}

 In the present paper we extend the concepts of multiplicative derivative and integral to
complex-valued functions of complex variable. Some drawbacks, arising with these concepts in the real case, are
explained satisfactorily. Properties of complex multiplicative derivatives and integrals are studied. In particular,
the fundamental theorem of complex multiplicative calculus, relating these concepts, is proved. It is shown that
complex multiplicative calculus is not just another realization of the ordinary calculus. In particular, the Cauchy
formula of complex calculus disappears in multiplicative complex calculus.
\end{abstract}

\maketitle

\section{Introduction}

The distinction of two real numbers $a$ and $b$ with $a\le b$ can be measured in different ways. The most popular way
is the difference $h=b-a$, saying that $b$ is greater than $a$ for $h$ units. Based on this in the second half of the
17th century Isaac Newton and Gottfried Wilhelm Leibnitz created differential and integral calculus. Later this
calculus was adapted to the study of functions by Leonard Euler.

Another way to measure the distinction of $a$ and $b$, if $0< a\le b$, is the ratio $r=b/a$, saying that $b$ is $r$
times as greater as $a$. This way gave rise to an alternative calculus, called multiplicative calculus, in the work of
Michael Grossman and Robert Katz \cite{GK}. Further contribution to multiplicative calculus and its applications was
done in Stanley \cite{ST}, Bashirov et al. \cite{B} and Riza et al. \cite{ROK}, etc. Independently, some elements of
stochastic multiplicative calculus are developed in the works of Karandikar \cite{K} and Daletskii and Teterina
\cite{DT}. There are other works by Volterra and Hostinsky \cite{V}, Aniszewska \cite{A}, Kasprzak et al. \cite{KLR},
Rybaczuk et al. \cite{RKZ}, which employ the term of multiplicative calculus but indeed they concern bigeometric
calculus in the Grossman's terminology \cite{G}. The bigeometric calculus is also considered in C\'{o}rdova-Lepe
\cite{C} under the name of proportional calculus. The Volterra's product integral from \cite{V} found further
development in the book by Slavik \cite{SL}.

In the existing literature, multiplicative calculus is considered as a calculus applicable to positive functions,
creating several questions. For example, the multiplicative derivative can also be extended to negative functions while
the functions with both positive and negative values are left out of multiplicative calculus. All these questions
suggest, that there may be an extension of multiplicative calculus, explaining this issue. Logically, such an extension
is seen in a development of multiplicative calculus for complex-valued functions of complex variable, i.e., creation of
complex multiplicative calculus. There are a lot of books on complex calculus. We refer to Ahlfors \cite {AH}, Conway
\cite{CO}, Sarason \cite{S}, Greene and Krantz \cite{GKr}, Lang \cite{L}, Derrick \cite{D} and Palka \cite{P}, which are
used during this study.

In the present paper the concepts of differentiation and integration of complex multiplicative calculus are defined and
studied. They are interpreted geometrically and a relationship between them is established. The obstacles from the real
case are explained satisfactorily. The developments show that complex multiplicative calculus is not just another
realization of the ordinary complex calculus. In particular, it is found that in complex multiplicative calculus the
Cauchy formula disappears, since the inequality $2\pi i\not= 0$ takes place as the equality $e^{2\pi i}=e^0=1$. This
demonstrates the importance of further study in complex multiplicative calculus.

One major remark about the notation is that the multiplicative versions of the concepts of Newtonian calculus will be
called as *concepts, for example, a *derivative means a multiplicative derivative. We denote by $\mathbb{R}$ and
$\mathbb{C}$ the fields of real and complex numbers, respectively. $\mathbb{N}$ and $\mathbb{Z}$ denote the collection
of all natural numbers and all integers, respectively. $\mathrm{Arg}\,z$ is the principal value of $\arg z$, noticing
that $-\pi <\mathrm{Arg}\,z\le \pi$. Always $\ln x$ refers to the natural logarithm of the real number $x>0$ whereas
$\log z$ to the same of the complex number $z\not= 0$. By $\mathrm{Log}\,z$ we denote the value at $z$ of the principal
branch of the complex logarithmic function, i.e., $\mathrm{Log}\,z=\ln |z|+i\mathrm{Arg}\,z$, where $i$ is the
imaginary unit and $|z|$ is the modulus of $z$. Under a function we always mean a single-valued function. The cases of
multi-valued functions are pointed out.


 \section{Motivation}

The *derivative at $t$ of a real-valued function $f$ of real variable is defined as the limit
 \begin{equation}
 \label{1}
 f^*(t)=\lim _{h\to 0}(f(t+h)/f(t))^{1/h},
 \end{equation}
that shows how many times the value $f(t)$ changes at $t$. If $f$ has pure positive values and is differentiable at
$t$, then in Bashirov et al. \cite{B} it is shown that $f^*$ exists and is related to the ordinary derivative $f'$ as
 \[
 f^*(t)=e^{[\ln f]'(t)}=e^{f'(t)/f(t)}.
 \]
One can also observe that if $f$ has pure negative values and is differentiable at $t$, then the limit in (\ref{1})
still exists with
 \[
 f^*(t)=e^{[\ln |f|]'(t)}=e^{|f|'(t)/|f|(t)}.
 \]
In both cases $f^*(t)>0$ and all higher order *derivatives of $f$ are defined as the first order
*derivatives of positive functions. Based on this, in Bashirov et al. \cite{B} multiplicative calculus was presented as
a calculus for positive functions, raising the following questions:
 \begin{enumerate}
 \item [(a)] Why all order *derivatives of a negative function are positive functions?
 \item [(b)] Why *differentiation is not applicable to functions with both positive and negative values?
 \item [(c)] What is the role of zero in *differentiation?
 \end{enumerate}

Similar questions arise with *integral as well. These questions from the real case suggest that there may be an
extension of *derivative, bringing an explanation to this issue. Indeed, all these questions find satisfactory answers
in the complex case.

For motivation, consider a complex-valued function $f$ of real variable on an open interval $(a,b)$. Considering the
limit in (\ref{1}) as a *derivative of $f$ at $t$, we have
 \begin{align*}
 f^*(t) &=\lim _{h\to 0}(f(t+h)/f(t))^{1/h}=\lim _{h\to 0}e^{(1/h)\log (f(t+h)/f(t))}\\
        &=e^{\lim _{h\to 0}(1/h)\ln |f(t+h)/f(t)|+i\lim _{h\to 0}(1/h)(\mathrm{Arg}\,(f(t+h)/f(t))+2\pi n)}.
 \end{align*}
 \begin{remark}
 \label{R1}
{\rm In the existing literature on complex calculus the symbol $e^z$ has two inconsistent usages. Most popularly, $e^z$
denotes the value at $z$ of the complex exponential function, that is a unique solution of $f'(z)=f(z)$ with $f(0)=1$.
This is a single-valued function and, by the Euler's formula, equals to $e^z=e^x(\cos y+i\sin y)$ if $z=x+iy$. The
second usage is the operation of raising $e$ to the complex power $z$, that results multiple values $e^{z\log e}$ or
$e^{z(1+2\pi ni)}$ for $n\in \mathbb{Z}$, in which the first usage is assumed. Later on, we will deal with raising to
complex powers. In order to avoid possible ambiguities, instead of $w^z$ (raising complex $w$ to complex power $z$) we
will prefer to write $e^{z\log w}$, reserving the symbol $e^z$ for the complex exponential function. For raising real
$x$ to real power $y$ we will still use the symbol $x^y$.}
 \end{remark}
 \begin{remark}
 \label{R2}
{\rm If $f(t)\not= 0$ and $f'(t)$ exists, then
 \[
 \lim _{h\to 0}\frac{\ln |f(t+h)/f(t)|}{h}=\lim _{h\to 0}\frac{\ln |f(t+h)|-\ln |f(t)|}{h}=[\ln |f|]'(t).
 \]
}
 \end{remark}
 \begin{remark}
 \label{R3}
{\rm Assume $f(t)\not= 0$, $f'(t)$ exists and, additionally, $\mathrm{Arg}\, f(t)\not= \pi $. Then
$\mathrm{Arg}\,(f(t+h)/f(t))=\mathrm{Arg}\,f(t+h)-\mathrm{Arg}\,f(t)$ for all $|h|<\varepsilon $, where $\varepsilon $
is sufficiently small. This implies that
 \[
 \lim _{h\to 0}\frac{\mathrm{Arg}\,(f(t+h)/f(t))+2\pi n}{h}=\lim _{h\to 0}\frac{\mathrm{Arg}\,f(t+h)-\mathrm{Arg}\,f(t)+2\pi n}{h}
 \]
exists and equals to $[\mathrm{Arg}\,f]'(t)$ if and only if $n=0$. Therefore, it seems to be reasonable to understand
$e^{(1/h)\log (f(t+h)/f(t))}$ as its principal value.}
 \end{remark}
 \begin{remark}
 \label{R4}
{\rm Again, assume that $f(t)\not= 0$ and $f'(t)$ exists. If $\mathrm{Arg}\, f(t)=\pi $, then for small values of $h$,
 \[
 \mathrm{Arg}\,(f(t+h)/f(t))=\left\{ \!\!\begin{array}{ll}
   \mathrm{Arg}\,f(t+h)-\mathrm{Arg}\,f(t) & \text{if}\ \mathrm{Arg}\,f(t+h)\ge 0,\\
   \mathrm{Arg}\,f(t+h)-\mathrm{Arg}\,f(t)+2\pi & \text{if}\ \mathrm{Arg}\,f(t+h)<0.
   \end{array}\!\!\right.
 \]
Therefore, the selection of the principal value of $e^{(1/h)\log (f(t+h)/f(t))}$, mentioned in Remark \ref{R3}, does
not guarantee the equality
 \[
 \lim _{h\to 0}(1/h)\mathrm{Arg}\,(f(t+h)/f(t))=[\mathrm{Arg}\,f]'(t)
 \]
in cases when $\mathrm{Arg}\,f(t)=\pi $. Instead, the issue can be improved if we replace $\mathrm{Arg}\, f$, ranging
in the interval $(-\pi ,\pi ]$, by any other branch $\Theta $ of $\arg f$, ranging in $(-\pi +\alpha ,\pi +\alpha]$
with $\alpha \in \mathbb{R}$, so that $\Theta (t)\not= \pi +\alpha $. But for $\alpha \not= 0$ the principal value of
$e^{(1/h)\log (f(t+h)/f(t))}$ differs from
 \[
 e^{(\ln |f(t+h)|-\ln |f(t)|)/h+i(\Theta (t+h)-\Theta (t))/h},
 \]
refusing the development from Remark \ref{R3}. This discrepancy can be overcome if we use the limit in (\ref{1}) only
for motivation of complex *derivatives, and define $f^*(t)$ directly as $e^{[\ln |f|]'(t)+i\Theta '(t)}$. Clearly, the
value of $\Theta '(t)$ is independent on selection of the branch $\Theta $ of $\arg f$ among those which range in
$(-\pi +\alpha , \pi +\alpha ]$ for $\alpha \in \mathbb{R}$ with $\Theta(t)\not= \pi +\alpha $.}
 \end{remark}
 \begin{remark}
 \label{R5}
{\rm Another issue is whether $[\ln |f|]'(t)+i\Theta '(t)=[\log f]'(t)$. Generally speaking, a branch of $\log f$ may
not exist. Even if it exists, it may not be a composition of a branch of $\log $ and $f$. In this regard, no problem
arises with *derivative because it is based on the \emph{local behavior} of the function $f$. For definition of
$f^*(t)$, selecting a sufficiently small neighborhood $U\subseteq (a,b)$ of $t$, we can reach the existence of the
branches of $\log f$ locally. Additionally, we can get these branches as a composition of the respective branches of
$\log $ and $f$, again locally. Moreover, we can use the log-differentiation formula $[\log f]'=f'/f$ independently on
selection of a branch of $\log $ if $f$ is nowhere-vanishing and $f'$ exists (see, Sarason \cite{S}).}
 \end{remark}

Based on Remarks \ref{R1}--\ref{R5}, consider a differentiable nowhere-vanishing complex-valued function $f$ of real
variable on an open interval $(a,b)$. Select a small neighborhood $U\subseteq (a,b)$ of $t\in (a,b)$ such that $\log f$
on $U$ has branches in the form of composition of branches of $\log $ and $f$. Let $|f|=R$ and let $\arg f=\Theta +2\pi
n$ on $U$, where $\Theta $ is any branch of $\arg f$ on $U$, and define $f^*(t)$ as
 \[
 f^*(t)=e^{[\ln |f|]'(t)+i\Theta '(t)},
 \]
or, by the log-differentiation formula,
 \begin{equation}
 \label{2}
 f^*(t)=e^{f'(t)/f(t)}.
 \end{equation}
Defined in this way, $f^*(t)$ exists as a single complex value.

An important consequence from (\ref{2}) is that if $f$ takes values on some ray from the origin on the complex plane,
i.e.,
 \[
 f(t)=R(t)e^{i(\theta +2\pi n)},\ \theta =\mathrm{const}.,
 \]
then
 \[
 f^*(t)=e^{R'(t)e^{i(\theta +2\pi n)}/R(t)e^{i(\theta +2\pi n)}}=e^{R'(t)/R(t)}=R^*(t).
 \]
This explains why all order *derivatives of a negative function are positive. More generally, the *derivative of a
complex-valued function $f$ of real variable, taking all the values on a ray from the origin, is a positive function
and $f^*(t)$ measures how many times the distance of $f(t)$ from the origin changes at $t$.

Another important consequence from (\ref{2}) is that if $f$ takes values on some circle centered at the origin on the
complex plane, i.e.,
 \[
 f(t)=re^{i(\Theta (t)+2\pi n)},\ r=\mathrm{const}.,
 \]
then
 \[
 f^*(t)=e^{ir\Theta '(t)e^{i(\Theta (t)+2\pi n)}/re^{i(\Theta (t)+2\pi n)}}=e^{i\Theta '(t)}=e^{i(\Theta '(t)+2\pi m)},
 \]
demonstrating that $f^*$ takes values on the unit circle centered at the origin on the complex plane and one of the
multiple values of $\arg f^*(t)$, that is $\Theta '(t)$, measures the rate of change of all the branches of $\arg f$ at
$t$. In particular, $f^*(t)=-1$ if $\Theta '(t)=\pi $. This is a case when *derivative is a negative number. Also,
$f^*(t)=i$ if $\Theta '(t)=\pi /2$, a case when *derivative belongs to the imaginary axis.

More generally, if
 \[
 f(t)=R(t)e^{i(\Theta (t)+2\pi n)},
 \]
then
 \begin{align*}
 f^*(t)&=e^{(R'(t)+iR(t)\Theta '(t))e^{i(\Theta (t)+2\pi n)}/R(t)e^{i(\Theta (t)+2\pi n)}}\\
       &=e^{R'(t)/R(t)}e^{i\Theta '(t)}=R^*(t)e^{i(\Theta '(t)+2\pi m)},
 \end{align*}
i.e., the modulus and the argument of $f^*(t)$ behave similar to the above mentioned two particular cases.

The most important conclusion from (\ref{2}) is that in *differentiation the origin on the complex plane acts
differently from the other points. More precisely, instead of the complex plane $\mathbb{C}$, the perforated complex
plane $\mathbb{C}\setminus \{0\}$ should be considered. This also explains why *derivative can not be applicable to
real-valued functions, taking both positive and negative values. This is because instead of the real line $\mathbb{R}$,
the perforated real line $\mathbb{R}\setminus \{0\}$ should be considered. Consequently, this removes the real-valued
functions with both positive and negative values from the consideration, since a continuous real-valued function with
the range in $\mathbb{R}\setminus \{0\}$ is either positive or negative. The functions with both positive and negative
values become considerable if they take complex values as well. This is because bypassing the origin when traveling
continuously from positive to negative numbers and vice versa is allowed on the complex plane.

All these demonstrate that it is actual to develop *differentiation and *integration for complex-valued functions of
complex variable. This is done in the following sections.


 \section{Complex Multiplicative Differentiation}

Now assume that $f$ is a nowhere-vanishing differentiable complex-valued function on an open connected set $D$ of the
complex plane. Select a sufficiently small neighborhood $U\subseteq D$ of the point $z\in D$ such that the branches of
$\log f$ on $U$ exist in the form of the composition of the respective branches of $\log $ and the restriction of $f$
to $U$, and the log-differentiation formula is valid for $\log f$ on $U$. According to (\ref{2}), define the
*derivative of $f$ at $z\in D$ by
 \begin{equation}
 \label{3}
 f^*(z)=e^{f'(z)/f(z)}.
 \end{equation}
By induction, we also obtain the following formula for higher-order multiplicative derivatives:
\begin{equation}
 \label{4}
 f^{*(n)}(z)=e^{[f'/f]^{(n-1)}(z)}, n=1,2,\ldots .
 \end{equation}
To derive the Cauchy--Riemann conditions in *form, let $z=x+iy=re^{i\theta }$ and $f(z)=u(z)+iv(z)=R(z)e^{i(\Theta
(z)+2\pi n)}$ for $z\in U$, where $\Theta $ is any suitable branch of $\arg f$. Since $f$ is differentiable, all
functions $u$, $v$, $R$, and $\Theta $ have continuous partial derivatives in $x$, $y$, $r$, and $\theta $.
 \begin{proposition}
 \label{P1}
Under the above conditions and notation,
 \begin{equation}
 \label{5}
 R^*_x(z)=\big[ e^{\Theta }\big] ^*_y(z)\ \ \text{and}\ \ R^*_y(z)=\big[ e^{-\Theta }\big] ^*_x(z),
 \end{equation}
where $g^*_x$ and $g^*_y$ denote the partial *derivatives with respect to $x$ and $y$ of the positive function $g$.
 \end{proposition}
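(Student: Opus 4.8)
The plan is to reduce both identities in (\ref{5}) to the ordinary Cauchy--Riemann equations, applied not to $f$ itself but to a local holomorphic branch of $\log f$ on $U$. First I would unwind the partial *derivatives. For a positive function $g$ of the real variables $x,y$, freezing $y$ and applying (\ref{2}) gives $g^*_x(z)=e^{g_x(z)/g(z)}=e^{[\ln g]_x(z)}$, and likewise $g^*_y(z)=e^{[\ln g]_y(z)}$. Consequently the left-hand sides of (\ref{5}) are $R^*_x(z)=e^{[\ln R]_x(z)}$ and $R^*_y(z)=e^{[\ln R]_y(z)}$, while the right-hand sides, using $\ln e^{\pm\Theta}=\pm\Theta$, are $\big[e^{\Theta}\big]^*_y(z)=e^{\Theta_y(z)}$ and $\big[e^{-\Theta}\big]^*_x(z)=e^{-\Theta_x(z)}$.

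Since every exponent appearing here is \emph{real} --- $R>0$, so $\ln R$ is real, and $\Theta$ is real-valued --- and $t\mapsto e^t$ is injective on $\mathbb{R}$, the two identities (\ref{5}) are equivalent to $[\ln R]_x(z)=\Theta_y(z)$ and $[\ln R]_y(z)=-\Theta_x(z)$. To obtain these, I would invoke the choice of $U$: on $U$ there is a branch $\phi$ of $\log f$ realized as a composition of a branch of $\log$ with $f$, hence holomorphic, with $\mathrm{Re}\,\phi=\ln|f|=\ln R$ and $\mathrm{Im}\,\phi=\arg f=\Theta+2\pi n$. Writing $\phi=p+iq$ with $p=\ln R$ and $q=\Theta+2\pi n$, the Cauchy--Riemann equations for $\phi$ are $p_x=q_y$ and $p_y=-q_x$; the additive constant $2\pi n$ drops out under differentiation, so $q_x=\Theta_x$, $q_y=\Theta_y$, and the two required identities follow. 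Evaluating at $z$ finishes the argument.

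The argument is essentially bookkeeping, so I do not expect a genuine obstacle; the only two points that need care are (i) the identification of the partial *derivative of a positive function with the exponential of its ordinary partial logarithmic derivative --- which is just (\ref{2}) with one variable held fixed --- and (ii) the fact that $\ln R$ and $\Theta$ are, up to an additive constant, the real and imaginary parts of one and the same holomorphic logarithm of $f$ on $U$. Both are guaranteed by the standing hypotheses that $f$ is differentiable and nowhere-vanishing and that $U$ was chosen small enough for such a branch of $\log f$ to exist as a composition of branches of $\log$ and $f$.
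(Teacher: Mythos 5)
Your proof is correct, but the core step is carried out by a genuinely different route than the paper's. You and the paper both begin identically: unwind the partial *derivatives via $g^*_x=e^{[\ln g]'_x}$ and use injectivity of the real exponential to reduce (\ref{5}) to $[\ln R]'_x=\Theta'_y$ and $[\ln R]'_y=-\Theta'_x$. From there the paper proves these by a direct computation in terms of $u$ and $v$: it differentiates $\ln\sqrt{u^2+v^2}$, invokes the ordinary Cauchy--Riemann equations for $f$, and identifies the result with $-\Theta'_x$ through the representation $\Theta=\tan^{-1}(v/u)+\pi k+2\pi n$ --- which forces a separate case analysis at points where $u(z)=0$ (split further into the case where $u$ vanishes only at isolated nearby points, handled by continuity of the partials, and the case where $u\equiv 0$ near $z$). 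You instead observe that the standing hypothesis already provides a holomorphic branch $\phi=\mathcal{L}\circ f$ of $\log f$ on $U$, whose real and imaginary parts are $\ln R$ and $\Theta+2\pi n$, and simply read off the ordinary Cauchy--Riemann equations for $\phi$; the additive constant dies under differentiation. Your argument is shorter and avoids the arctangent bookkeeping and the $u(z)=0$ case split entirely, at the price of leaning more heavily on the hypothesis that such a branch exists and is holomorphic on $U$ (which the paper has set up precisely for this purpose, and which it exploits elsewhere via the log-differentiation formula $[\log f]'=f'/f$). The paper's computation is more self-contained in that it works directly from the Cauchy--Riemann equations for $u$ and $v$ without passing through the logarithm. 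Both are valid proofs of the proposition.
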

 \begin{proof}
It suffices to proved that
 \begin{equation}
 \label{6}
 [\ln R]'_x(z)=\Theta '_y(z)\ \ \text{and}\ \ [\ln R]'_y(z)=-\Theta '_x(z).
 \end{equation}
The proofs of these equalities are similar. Therefore, we prove one of them, for example, the second one. At first, let
$u(z)\not= 0$. From the Cauchy--Riemann conditions $u'_x=v'_y$ and $u'_y=-v'_x$,
 \begin{align*}
 [\ln R]'_y
  &=\big( \ln\sqrt{u^2+v^2}\big) '_y=\frac{1}{\sqrt{u^2+v^2}}\cdot \frac{2(uu'_y+vv'_y)}{2\sqrt{u^2+v^2}}=\frac{uu'_y+vv'_y}{u^2+v^2}\\
  &=\frac{-uv'_x+vu'_x}{u^2+v^2}=-\frac{1}{1+v^2/u^2}\cdot \frac{uv'_x-u'_xv}{u^2}\\
  &=-\Big( \tan ^{-1}\frac{v}{u}+\pi k+2\pi n\Big) '_x=-\Theta '_x,
 \end{align*}
where $k\in \{ -1,0,1\} $ and $n\in \mathbb{Z}$. Now assume $u(z)=0$. Then $v(z)\not= 0$. Consider two subcases. In the
first subcase, assume that there is a sequence $\{ z_n\} $ such that $z_n\to z$ and $u(z_n)\not= 0$. Then the
continuity of $[\ln R]'_y$ and $\Theta '_x$ implies
 \[
 [\ln R]'_y(z)=\frac{v'_y(z)}{v(z)}=\frac{u'_x(z)}{v(z)}=-\Theta '_x(z).
 \]
In the second subcase, let $u$ be identically zero on some neighborhood of $z$. Then by Cauchy--Riemann conditions, $v$
takes a nonzero constant value on this neighborhood. Hence, $[\ln R]'_y(z)=0$. At the same time, on this neighborhood
either $\Theta (z)=\pi /2+2\pi n$ or $\Theta (z)=-\pi /2+2\pi n$, implying $\Theta '_x(z)=0$. Hence, again $[\ln
R]'_y(z)=0=-\Theta '_x(z)$.
 \end{proof}
 \begin{remark}
 \label{R6}
{\rm The converse of Proposition \ref{P1} is also valid. For this, assume that the equalities in (\ref{6}), which are
equivalent to the equalities in (\ref{5}), hold. Then from $u(z)=R(z)\cos \Theta (z)$ and $v(z)=R(z)\sin \Theta (z)$,
we obtain
 \begin{align*}
 u'_x(z) &=R'_x(z)\cos \Theta (z)-R(z)\Theta '_x(z)\sin \Theta (z)\\
         &=R(z)[\ln R]'_x(z)\cos \Theta (z)-R(z)\Theta '_x(z)\sin \Theta (z)\\
         &=R(z)\Theta '_y(z)\cos \Theta (z)+R(z)[\ln R]'_y(z)\sin \Theta (z)\\
         &=R(z)\Theta '_y(z)\cos \Theta (z)+R'_y(z)\sin \Theta (z)=v'_y(z).
 \end{align*}
In a similar way $u'_y(z)=-v'_x(z)$ can be proved.}
 \end{remark}

Thus, the equalities in (\ref{5}) or in (\ref{6}) are just another form of the ordinary Cauchy--Riemann conditions and,
hence, we call them Cauchy--Riemann *conditions. They can be written in terms of partial derivatives with respect to
$r$ and $\theta $ as well. Indeed, from
 \begin{align*}
 \Theta '_\theta  &=\Theta '_xx'_\theta +\Theta '_yy'_\theta =-\Theta '_xr\sin \theta +\Theta '_yr\cos \theta \\
                  &=r([\ln R]'_yy'_r+[\ln R]'_xx'_r)=r[\ln R]'_r
 \end{align*}
and
 \begin{align*}
 [\ln R]'_\theta &=[\ln R]'_xx'_\theta +[\ln R]'_yy'_\theta =-\Theta '_yr\sin \theta -\Theta '_xr\cos \theta \\
            &=-r(\Theta '_yy'_r+\Theta '_xx'_r)=-r\Theta '_r,
 \end{align*}
we obtain
 \[
 \Theta '_\theta (z)=r[\ln R]'_r(z)\ \ \text{and}\ \ [\ln R]'_\theta (z)=-r\Theta '_r(z),
 \]
or in terms of *derivatives
 \[
 R^*_r(z)^r=\big[ e^{\Theta }\big] ^*_\theta (z)\ \ \text{and}\ \ R^*_\theta (z)=\big[ e^{-\Theta }\big] ^*_r(z)^r.
 \]
Later it will be convenient to use the Cauchy--Riemann *conditions in the following form.
 \begin{theorem}[Cauchy--Riemann *conditions]
 \label{T1}
Under the above conditions and notation,
 \begin{equation}
 \label{7}
 \left\{ \begin{array}{l}
 |f^*(z)|=R^*_x(z)=[e^{\Theta }]^*_y(z),\\
 \arg f^*(z)=\Theta '_x(z)+2\pi n=-[\ln R]'_y(z)+2\pi n,\ n\in \mathbb{Z}.
 \end{array}\right.
 \end{equation}
 \end{theorem}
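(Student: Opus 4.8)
The plan is to reduce the whole statement to one polar-coordinate computation of the quotient $f'(z)/f(z)$ and then simply read off the modulus and the argument of its exponential. First I would invoke the log-differentiation formula recalled in Remark \ref{R5}, valid for $\log f$ on the neighbourhood $U$ fixed above: $f'/f=[\log f]'$, where $\log f$ denotes any of the branches that exist on $U$ as a composition of a branch of $\log$ with $f$. Writing such a branch as $\log f=\ln R+i(\Theta+2\pi n)$ with $n$ constant on $U$, and using that the complex derivative of a holomorphic function equals its partial derivative with respect to $x$, I obtain
\[
\frac{f'(z)}{f(z)}=[\log f]'(z)=[\log f]'_x(z)=[\ln R]'_x(z)+i\,\Theta'_x(z).
\]

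Substituting this into the definition (\ref{3}) of the *derivative gives $f^*(z)=e^{[\ln R]'_x(z)+i\Theta'_x(z)}$, so that, separating the modulus and the argument of this complex exponential,
\[
|f^*(z)|=e^{[\ln R]'_x(z)},\qquad \arg f^*(z)=\Theta'_x(z)+2\pi n,\quad n\in\mathbb{Z}.
\]
It then remains only to match these with the right-hand sides of (\ref{7}). For the modulus: by the definition of the partial *derivative of the positive function $R$ one has $R^*_x(z)=e^{[\ln R]'_x(z)}$, hence $|f^*(z)|=R^*_x(z)$; combining the first Cauchy--Riemann *condition $[\ln R]'_x=\Theta'_y$ from (\ref{6}) with the analogous identity $[e^{\Theta}]^*_y(z)=e^{[\ln e^{\Theta}]'_y(z)}=e^{\Theta'_y(z)}$ then yields $R^*_x(z)=[e^{\Theta}]^*_y(z)$. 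For the argument: the second Cauchy--Riemann *condition $[\ln R]'_y=-\Theta'_x$ from (\ref{6}) rewrites $\Theta'_x(z)+2\pi n$ as $-[\ln R]'_y(z)+2\pi n$.

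The analytic content is thus entirely contained in Remark \ref{R5} and Proposition \ref{P1}, both already available; the only point needing care is the branch bookkeeping. One should confirm, as noted in Remarks \ref{R4}--\ref{R5}, that $[\ln R]'_x(z)$ and $\Theta'_x(z)$ are independent of the branch of $\arg f$ chosen on $U$ (the additive constants $2\pi n$, and the $\pi k$ occurring in the expression $\tan^{-1}(v/u)+\pi k$ for $\Theta$, disappear under differentiation), so that $|f^*(z)|$ is a genuine number and $\arg f^*(z)$ a well-defined coset modulo $2\pi$, each equal to the corresponding right-hand side of (\ref{7}). I expect this bookkeeping, rather than any estimate or limiting argument, to be the one place where a careful word is needed.
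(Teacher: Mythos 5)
Your proof is correct, and its overall skeleton matches the paper's: start from $f^*(z)=e^{f'(z)/f(z)}$, use the fact that the complex derivative equals the $x$-partial, split the exponent into real and imaginary parts to read off $|f^*(z)|$ and $\arg f^*(z)$, and invoke Proposition \ref{P1} for the remaining equalities in (\ref{7}). The one genuine difference is in how the exponent is computed. The paper works in Cartesian form, writing $f'/f=(u'_x+iv'_x)/(u+iv)$, rationalizing, and then applying the ordinary Cauchy--Riemann conditions inside the proof to land directly on $|f^*|=R^*_x$ and $\arg f^*=-[\ln R]'_y+2\pi n$; the equalities involving $\Theta$ are then quoted from (\ref{5})--(\ref{6}). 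You instead work in polar form via the log-differentiation formula of Remark \ref{R5}, obtaining $f'/f=[\ln R]'_x+i\Theta'_x$ at once, so you land directly on $|f^*|=R^*_x$ and $\arg f^*=\Theta'_x+2\pi n$ and delegate all of the Cauchy--Riemann content to Proposition \ref{P1} to recover the other two expressions. The two routes are mirror images; yours is slightly cleaner in that it avoids the rationalization and the in-line use of $u'_x=v'_y$, $u'_y=-v'_x$, at the cost of leaning on the validity of $[\log f]'=f'/f$ on $U$ --- which the paper has, however, explicitly built into the standing hypotheses of this section, so nothing is missing. Your closing remark on branch independence of $[\ln R]'_x$ and $\Theta'_x$ is the right point to flag and is handled correctly.
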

 \begin{proof}
From
 \[
 f^*(z)=e^{f'(z)/f(z)}=e^\frac{u'_x(z)+iv'_x(z)}{u(z)+iv(z)}=
 e^{\frac{u(z)u'_x(z)+v(z)v'_x(z)}{u(z)^2+v(z)^2}+i\frac{u(z)v'_x(z)-u'_x(z)v(z)}{u(z)^2+v(z)^2}},
 \]
we have
 \[
 |f^*(z)|=e^{\frac{u(z)u'_x(z)+v(z)v'_x(z)}{u(z)^2+v(z)^2}}=e^{\frac{R(z)R'_x(z)}{R(z)^2}}=e^{\frac{R'_x(z)}{R(z)}}=R^*_x(z)
 \]
and, by Cauchy--Riemann conditions,
 \begin{align*}
 \arg f^*(z)&=\frac{u(z)v'_x(z)-u'_x(z)v(z)}{u(z)^2+v(z)^2}+2\pi n=-\frac{u(z)u'_y(z)+v(z)v'_y(z)}{u(z)^2+v(z)^2}+2\pi n\\
             &=-\frac{R(z)R'_y(z)}{R(z)^2}+2\pi n=-\frac{R'_y(z)}{R(z)}+2\pi n=-[\ln R]'_y(z)+2\pi n.
 \end{align*}
The other two equalities in (\ref{7}) are from (\ref{5})--(\ref{6}).
 \end{proof}

A few examples will be relevant to demonstrate features of complex *differentiation.
 \begin{example}
 \label{E1}
{\rm The function $f(z)=c$, $z\in \mathbb{C}$, where $c=\mathrm{const}\in \mathbb{C}\setminus \{ 0\} $, is an entire
function and its
*derivative
 \[
 f^*(z)=e^{f'(z)/f(z)}=e^{0/c}=1,\ z\in \mathbb{C},
 \]
is again an entire function. Respectively, $f^{*(n)}(z)=1$, $z\in \mathbb{C}$, $n\in \mathbb{N}$. Thus, in complex
*calculus the role of 0 (the neutral element of addition) is shifted to 1 (the neutral element of multiplication).}
 \end{example}
 \begin{example}
 \label{E2}
{\rm The function $f(z)=e^{cz}$, $z\in \mathbb{C}$, where $c=\mathrm{const}\in \mathbb{C}$, is an entire function and
its *derivative
 \[
 f^*(z)=e^{f'(z)/f(z)}=e^{ce^{cz}/e^{cz}}=e^c,\ z\in \mathbb{C},
 \]
is again an entire function, taking identically the nonzero value $e^c$. Respectively, $f^{*(n)}(z)=1$, $z\in
\mathbb{C}$, $n=2,3,\ldots \, $. Thus, in complex *calculus $f(z)=e^{cz}$ plays the role of the linear function
$g(z)=az$ with $a=e^c$ in Newtonian calculus.}
 \end{example}
 \begin{example}
 \label{E3}
{\rm For another entire function $f(z)=e^{ce^z}$, $z\in \mathbb{C}$, with $c=\mathrm{const}\in \mathbb{C}$, that is
called a Gompertz function if $z$ takes real values, we have
 \[
 f^*(z)=e^{f'(z)/f(z)}=e^{ce^ze^{ce^z}/e^{ce^z}}=e^{ce^z},\ z\in \mathbb{C}.
 \]
Hence, $f$ is a solution of the *differential equation $f^*=f$. Respectively, $f^{*(n)}(z)=e^{ce^z}$, $z\in
\mathbb{C}$, $n\in \mathbb{N}$. Thus, in complex *calculus $f(z)=e^{ce^z}$ plays the role of the exponential function
$g(z)=ce^z$ in Newtonian calculus.}
 \end{example}
 \begin{example}
 \label{E4}
{\rm The function $f(z)=z$, $z\in \mathbb{C}$, is also entire, but its *derivative
 \[
 f^*(z)=e^{f'(z)/f(z)}=e^{1/z},\ z\in \mathbb{C}\setminus \{ 0\},
 \]
accounts an essential singularity at $z=0$. This is because *differentiation is applicable to functions with the range
in $\mathbb{C}\setminus \{ 0\} $. Thus, the *derivative of an entire function may not be entire. We also have
$f^{*(n)}(z)=e^{(-1)^{n-1}(n-1)!/z^n}$, $z\in \mathbb{C}\setminus \{ 0\} $, $n\in \mathbb{N}$.}
 \end{example}
 \begin{example}
 \label{E5}
{\rm The holomorphic function $f(z)=1/z$, $z\in \mathbb{C}\setminus \{ 0\} $ has a simple pole at $z=0$, but its
*derivative
 \[
 f^*(z)=e^{f'(z)/f(z)}=e^{-1/z},\ z\in \mathbb{C}\setminus \{ 0\} ,
 \]
is holomorphic with an essential singularity at $z=0$. We also have $f^{*(n)}(z)=e^{(-1)^n(n-1)!/z^n}$, $z\in
\mathbb{C}\setminus \{ 0\} $, $n\in \mathbb{N}$.}
 \end{example}

The \emph{*derivative of a multi-valued function} can be defined as derivatives of its branches and it is naturally
expected to be multi-valued as in the following case.
 \begin{example}
 \label{E6}
{\rm The function $f(z)=\log z$, $z\in \mathbb{C}\setminus \{ 0\} $, is multi-valued with a branch point at $z=0$. Its
*derivative
 \[
 f^*(z)=e^{f'(z)/f(z)}=e^{1/(z\log z)},\ z\in \mathbb{C}\setminus \{ 0\},
 \]
is still multi-valued with a branch point at $z=0$}.
 \end{example}

In exceptional cases the *derivative of multi-valued function may be single-valued as in the following case.
 \begin{example}
 \label{E7}
{\rm The function $f(z)=e^{z\log z}$, $z\in \mathbb{C}\setminus \{ 0\} $, is multi-valued with a branch point at $z=0$,
but its
*derivative
 \[
 f^*(z)=e^{f'(z)/f(z)}=e^{1+\log z}=ez,\ z\in \mathbb{C}\setminus \{ 0\},
 \]
is single-valued and has a removable singularity at $z=0$, and its $n$th order *derivative
$f^{*(n)}(z)=e^{(-1)^n(n-2)!/z^{n-1}}$, $z\in \mathbb{C}\setminus \{ 0\}$, $n=2,3,\ldots \,$, accounts an essential
singularity at $z=0$. We also see that in complex *calculus the function $f(z)=e^{z\log z}$, $z\in \mathbb{C}\setminus
\{ 0\} $, plays the role of the quadratic function $g(z)=az^2$ with $a=e/2$ in Newtonian calculus.}
 \end{example}

We will say that a complex-valued function $f$ of complex variable (single- or multi-valued) is
\textit{*differentiable} at $z\in \mathbb{C}$ if it is differentiable at $z$ and $f(z)\not= 0$. We will also say that
$f$ is \textit{*holomorphic} or \textit{*analytic} on an open connected set $D$ if $f^*(z)$ exists for every $z\in D$.
The above examples demonstrate that for a given
*holomorphic function there are a few kinds of significant points on the complex plane which need a special attention:
 \begin{description}
 \item [{\rm (a)}] removable singular points, as in case of holomorphic functions,
 \item [{\rm (b)}] poles, as in case of holomorphic functions,
 \item [{\rm (c)}] essential singular points, as in case of holomorphic functions,
 \item [{\rm (d)}] zeros, at which a function takes zero value,
 \item [{\rm (e)}] branch point, if a function is multi-valued.
 \end{description}
Based on the above examples, we can state the following \emph{conservation principle} for the set $E_f$ of all these
points for the function $f$. Although *differentiation may change the kind of the points from $E_f$, the equality
$E_f=E_{f^{*(n)}}$ holds for all $n\in \mathbb{N}$. The points of $E_f$ are isolated, their total number is at most
countable and $E_f$ has no accumulation point according to the theorems of complex calculus. Note that there is no need
to make an exception for the identically zero function since it is not *holomorphic although it is entire. This basic
principle is not clearly seen in complex calculus and was not stated before to our knowledge. It become visible in
complex *calculus.

The following is an immediate consequence from the above discussion.
 \begin{theorem}
 \label{T2}
A complex-valued function $f$ of complex variable $z=x+iy$, defined on an open connected set $D$ of the complex plane,
is *holomorphic if and only if $f$ is nowhere-vanishing on $D$, $|f|$ and for every $z\in \mathbb{C}$ one of the local
continuous branches of $\arg f$ in a small neighborhood of $z$ have continuous partial derivatives in $x$ and in $y$,
and the Cauchy--Riemann *conditions $(\ref{7})$ hold. Furthermore, a *holomorphic function $f$ has all order
*derivatives and its $n$th order *derivative satisfies $(\ref{4})$.
 \end{theorem}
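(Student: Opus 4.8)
The plan is to extract the statement directly from the definition of $*$-holomorphy together with the characterization of ordinary holomorphy via the Cauchy--Riemann conditions, using Proposition \ref{P1} and Remark \ref{R6} as the bridge. Recall that by our conventions $f$ is $*$-holomorphic on $D$ precisely when $f^*(z)=e^{f'(z)/f(z)}$ exists for every $z\in D$, which by definition means $f$ is differentiable at $z$ (in the ordinary sense) and $f(z)\neq 0$ at every such point. So the content of the theorem is to translate ``$f$ differentiable and nowhere-vanishing on $D$'' into the stated conditions on $|f|$, on a local branch of $\arg f$, and on the Cauchy--Riemann $*$-conditions \eqref{7}.

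First I would argue the forward direction. Suppose $f$ is $*$-holomorphic on $D$. Then $f$ is nowhere-vanishing by definition, and it is differentiable, hence holomorphic, on $D$. Writing $f=u+iv=Re^{i(\Theta+2\pi n)}$ locally, the fact that $R=|f|=\sqrt{u^2+v^2}$ has continuous partial derivatives follows because $u,v$ are $C^\infty$ (holomorphic functions are smooth) and $R>0$, so the square root is smooth; for the branch $\Theta$ of $\arg f$, in a small enough neighborhood of any $z_0$ one can write $\Theta=\tan^{-1}(v/u)+\mathrm{const}$ (choosing the chart so the denominator is nonzero, i.e. $\Theta(z_0)\neq \pm\pi/2$; otherwise use $\cot^{-1}(u/v)$), which again is a composition of smooth functions, so $\Theta$ has continuous partials in $x$ and $y$. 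Finally, since $f$ is holomorphic the ordinary Cauchy--Riemann conditions hold, and Proposition \ref{P1} (equivalently \eqref{6}) together with the computation already carried out in the proof of Theorem \ref{T1} shows that \eqref{7} holds at every $z\in D$.

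For the converse, assume $f$ is nowhere-vanishing on $D$, that $|f|$ and some local continuous branch $\Theta$ of $\arg f$ have continuous partial derivatives in $x$ and $y$, and that \eqref{7} holds. The two equalities in \eqref{7} are, after taking $\log$ of the first and reading off the argument in the second, exactly the pair \eqref{6}, which by the reverse computation in Remark \ref{R6} yields the ordinary Cauchy--Riemann equations $u'_x=v'_y$, $u'_y=-v'_x$ at every point; together with the continuity of the partial derivatives of $u=R\cos\Theta$ and $v=R\sin\Theta$ — which is inherited from the assumed continuity of the partials of $R=|f|$ and of $\Theta$ — the classical Looman--Menchoff-free version of the converse Cauchy--Riemann theorem gives that $f$ is (complex) differentiable, hence holomorphic, on $D$. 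Since moreover $f(z)\neq 0$ everywhere, $f^*(z)=e^{f'(z)/f(z)}$ is defined at every $z\in D$, i.e. $f$ is $*$-holomorphic. The last sentence of the theorem then follows: a holomorphic nowhere-vanishing $f$ has $f'/f$ holomorphic on $D$, hence infinitely differentiable, so $[f'/f]^{(n-1)}$ exists for all $n$, and formula \eqref{4} is obtained by the induction already indicated before \eqref{4}.

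The one delicate point, and the step I expect to be the main obstacle, is the converse implication ``continuous first partials $+$ Cauchy--Riemann $\Rightarrow$ holomorphic'': one must be careful to invoke the correct hypothesis, namely that it is the partials of $u$ and $v$ (not merely of $R$ and $\Theta$) that are continuous, and then cite the standard theorem. Here the bookkeeping with branches also requires a little care — one needs that the chosen local branch $\Theta$ of $\arg f$ is the same one used in \eqref{7}, and that near points with $\Theta(z_0)=\pm\pi$ a shifted branch is used so that $u=R\cos\Theta$, $v=R\sin\Theta$ remain smooth; this is exactly the flexibility in the choice of branch already discussed in Remarks \ref{R4}--\ref{R5}, so no new difficulty arises beyond organizing the cases.
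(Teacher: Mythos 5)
Your argument is correct and follows exactly the route the paper intends: the paper states Theorem \ref{T2} as an ``immediate consequence from the above discussion'' with no written proof, and your expansion — definition of $*$-holomorphy $=$ differentiability plus nonvanishing, smoothness of $R$ and of local branches of $\arg f$ from smoothness of $u,v$, Proposition \ref{P1}/Remark \ref{R6} to pass between \eqref{6} and the ordinary Cauchy--Riemann equations, the classical ``continuous partials $+$ Cauchy--Riemann $\Rightarrow$ holomorphic'' theorem for the converse, and holomorphy of $f'/f$ for formula \eqref{4} — supplies precisely the details that discussion relies on. Your closing caution about which partials must be continuous and about the choice of branch near $\Theta(z_0)=\pm\pi$ is well placed and consistent with Remarks \ref{R4}--\ref{R6}.
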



 \section{Properties of Complex Multiplicative Derivatives}

Some of properties of *derivatives from the real case, listed in Bashirov et al. \cite{B}, can immediately be extended
to complex case. For example,
 \begin{description}
 \item [{\rm (a)}]
 $[cf]^*(z)=f^*(z)$,\ $c=\mathrm{const.}\in \mathbb{C}\setminus \{ 0\} $;\vspace{0.1cm}
 \item [{\rm (b)}]
 $[fg]^*(z)=f^*(z)g^*(z)$;\vspace{0.1cm}
 \item [{\rm (c)}]
 $[f/g]^*(z)=f^*(z)/g^*(z)$,
 \end{description}
which can be proved directly by using (\ref{3}). But some other properties can be extended nontrivially. For example,
the equalities
 \[
 [f^g]^*(x)=f^*(x)^{g(x)}f(x)^{g'(x)},\ \ \text{and}\ \ [f\circ g]^*(x)=f^*(g(x))^{g'(x)}
 \]
from the real case fail in the complex case because they include multi-valued functions. They can be stated in the
form:
 \begin{description}
 \item [{\rm (d)}]
 $[e^{g\log f}]^*(z)\subseteq e^{g(z)\log f^*(z)}e^{g'(z)\log f(z)}$ in the sense each branch value of $[e^{g\log f}]^*(z)$ is the product of
 some branch values of $e^{g(z)\log f^*(z)}$ and $e^{g'(z)\log f(z)}$;\vspace{0.1cm}
 \item [{\rm (e)}]
 $[f\circ g]^*(z)\in e^{g'(z)\log f^*(g(z))}$ in the sense that $[f\circ g]^*(z)$ equals to some branch value of $e^{g'(z)\log f^*(g(z))}$.
 \end{description}
To prove (d), we evaluate
 \[
 \big[ e^{g\log f}\big] ^*(z)=e^{g'(z)\log f(z)}e^{g(z)f'(z)/f(z)}.
 \]
On the other hand,
 \[
 e^{g(z)\log f^*(z)}e^{g'(z)\log f(z)}=e^{g(z)\log e^{f'(z)/f(z)}}e^{g'(z)\log f(z)}.
 \]
Here, note that although the equality $e^{\log w}=w$ holds for all $w\not= 0$, in general we have $\log e^w\not =w$ if
we assume any branch of $\log $. The equality $\mathcal{L}e^w=w$ holds if $\mathcal{L}$ is a branch of $\log $, ranging
in the strip
 \[
 \{ x+iy\in \mathbb{C}:\alpha <y\le \alpha +2\pi \} ,
 \]
where the imaginary part of $w$ falls into $(\alpha ,\alpha +2\pi ]$. Letting $w=f'(z)/f(z)$, we obtain that each
branch value of $[e^{g\log f}]^*(z)$ is the product of some branch values of $e^{g(z)\log f^*(z)}$ and $e^{g'(z)\log
f(z)}$. This proves (d). In the same way, (e) can be proved.

In particular, if $g(z)\equiv c=\mathrm{const.}$ for $c\in\mathbb{C}$, then
 \[
 \big[ e^{c\log f}\big] ^*(z)=e^{cf'(z)/f(z)},
 \]
implying that $[e^{c\log f}]^*$ is single-valued. Hence, (d) reduces to
 \begin{description}
 \item [{\rm (f)}]
 $[e^{c\log f}]^*(z)\in e^{c\log f^*(z)}$, $c=\mathrm{const.}\in \mathbb{C}$, in the sense that $[e^{c\log f}]^*(z)$ equals to some branch value of
 $e^{c\log f^*(z)}$.
 \end{description}

The properties (d)--(f) are pointwise in sense that for distinct values of $z$ distinct branches of $\log $ are
required to get the respective branch values of the right hand sides. This makes (d)--(f) less useful than (a)--(c).
But for nonnegative integer values of $c$, both the right and left hand sides in (f) are single valued, reducing (f) to
the equality
 \begin{description}
 \item {\rm (g)}
 $[e^{n\log f}]^*(z)=e^{n\log f^*(z)}$, $n=\mathbb{N}\cup \{ 0\} $.
 \end{description}
The proof of this is just multiple application of (b).


 \section{Line Multiplicative Integrals}

In order to develop complex *integration, we need in line *integrals as well the fundamental theorem of calculus for
line *integrals.

Let $f$ be a positive function of two variables, defined on an open connected set in $\mathbb{R}^2$, and let $C$ be a
piecewise smooth curve in the domain of $f$. Take a partition $\mathcal{P}=\{ P_0,\ldots ,P_m\} $ on $C$ and let $(\xi
_k, \eta _k)$ be a point on $C$ between $P_{k-1}$ and $P_k$. Denote by $\Delta s_k$ the arclength of $C$ from the point
$P_{k-1}$ to $P_k$. According to the definition of *integral from Bashirov et al. \cite{B}, define the integral product
 \[
 P(f,\mathcal{P})=\prod _{k=1}^mf(\xi _k,\eta _k)^{\Delta s_k}.
 \]
The limit of this product when $\max \{ \Delta s_1,\ldots ,\Delta s_m\} \to 0$ independently on selection of the points
$(\xi _k,\eta _k)$ will be called a \emph{line *integral of $f$ in $ds$ along $C$}, for which we will use the symbol
 \[
 \int _Cf(x,y)^{ds}.
 \]
From
 \[
 \prod _{k=1}^mf(\xi _k,\eta _k)^{\Delta s_k}=e^{\sum _{k=1}^m\ln f(\xi _k,\eta _k)\Delta s_k},
 \]
it is clearly seen that the line *integral of $f$ along $C$ exists if $f$ is a positive function and the line integral
of $\ln f$ along $C$ exists, and they are related as
 \[
 \int _Cf(x,y)^{ds}=e^{\int _C\ln f(x,y)\,ds}.
 \]
The following properties of line *integrals in $ds$ can be proved easily:
 \begin{align*}
 {\rm (a)} &\ \int _C(f(x,y)^p)^{ds}=\Big( \int _Cf(x,y)^{ds}\Big) ^p,\ p\in \mathbb{R},
             \ \ \ \ \ \ \ \ \ \ \ \ \ \ \ \ \ \ \ \ \ \ \ \ \ \ \ \ \ \ \ \ \ \ \ \ \ \ \ \\
 {\rm (b)} &\ \int _C(f(x,y)g(x,y))^{ds}=\int _Cf(x,y)^{ds}\cdot \int _Cg(x,y)^{ds},\\
 {\rm (c)} &\ \int _C(f(x,y)/g(x,y))^{ds}=\int _Cf(x,y)^{ds}\big/ \int _Cg(x,y)^{ds},\\
 {\rm (d)} &\ \int _Cf(x,y)^{ds}=\int _{C_1}f(x,y)^{ds}\cdot \int _{C_2}f(x,y)^{ds},\ C=C_1+C_2,
 \end{align*}
where $C=C_1+C_2$ means that the curve $C$ is divided into two pieces at some interior point of $[a,b]$.

In a similar way, we can introduce the line *integrals in $dx$ and in $dy$ and establish their relation to the
respective line integrals in the form
 \begin{equation}
 \label{8}
 \int _Cf(x,y)^{dx}=e^{\int _C\ln f(x,y)\,dx}\ \ \text{and}\ \  \int _Cf(x,y)^{dy}=e^{\int _C\ln f(x,y)\,dy}.
 \end{equation}
Clearly, all these three forms of line *integrals exist if $f$ is a positive continuous function. The above mentioned
properties of the line *integrals in $ds$ are valid for line *integrals in $dx$ and in $dy$ as well. Additionally,
 \[
 \int _Cf(x,y)^{dx}=\Big( \int _{-C}f(x,y)^{dx}\Big) ^{-1}\ \ \text{and}\ \
 \int _Cf(x,y)^{dy}=\Big( \int _{-C}f(x,y)^{dy}\Big) ^{-1}
 \]
while
 \[
 \int _Cf(x,y)^{ds}=\int _{-C}f(x,y)^{ds},
 \]
where $-C$ is the curve $C$ with the opposite orientation. Moreover, the following evaluation formulae for the line
*integrals are also easily seen:
 \begin{align*}
 {\rm (a)} &\ \int _Cf(x,y)^{ds}&=\int _a^b\Big( f(x(t),y(t))^{\sqrt{x'(t)^2+y'(t)^2}}\Big) ^{dt},
              \ \ \ \ \ \ \ \ \ \ \ \ \ \ \ \ \ \ \ \ \ \ \ \ \ \ \ \ \ \ \ \\
 {\rm (b)} &\ \int _Cf(x,y)^{dx}&=\int _a^b\Big( f(x(t),y(t))^{x'(t)}\Big) ^{dt},
              \ \ \ \ \ \ \ \ \ \ \ \ \ \ \ \ \ \ \ \ \ \ \ \ \ \ \ \ \ \ \ \ \ \ \ \ \ \ \ \ \ \\
 {\rm (c)} &\ \int _Cf(x,y)^{dy}&=\int _a^b\Big( f(x(t),y(t))^{y'(t)}\Big) ^{dt},
              \ \ \ \ \ \ \ \ \ \ \ \ \ \ \ \ \ \ \ \ \ \ \ \ \ \ \ \ \ \ \ \ \ \ \ \ \ \ \ \ \ \\
 \end{align*}
where $\{ (x(t),y(t)):a\le t\le b\} $ is a suitable parametrization of $C$ and $\int _a^bg(t)^{dt}$ is the *integral of
$g$ on the interval $[a,b]$ (see, Bashirov et al. \cite{B}). It is also suitable to denote
 \[
 \int _Cf(x,y)^{dx}g(x,y)^{dy}=\int _Cf(x,y)^{dx}\cdot \int _Cg(x,y)^{dy}.
 \]
In cases when $C$ is a closed curve we write $\oint _C$ instead of $\int _C$.
 \begin{example}
 \label{E8}
{\rm Let $c>0$ and let $C=\{ (x(t),y(t)):a\le t\le b\} $ be a piecewise smooth curve. Then
 \[
 \int _Cc^{dx}=e^{\int _C\ln c\,dx}=e^{(x(b)-x(a))\ln c}=c^{x(b)-x(a)}.
 \] }
 \end{example}

 \begin{theorem}[Fundamental theorem of calculus for line *integrals]
 \label{T3}
Let $D\subseteq \mathbb{R}^2$ be an open connected set and let $C=\{ (x(t),y(t)):a\le t\le b\} $ be a piecewise smooth
curve in $D$. Assume that $f$ is a real-valued continuously *differentiable function on $D$. Then
 \[
 \int _Cf^*_x(x,y)^{dx}f^*_y(x,y)^{dy}=f(x(b),y(b))/f(x(a),y(a)).
 \]
 \end{theorem}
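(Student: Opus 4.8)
The plan is to reduce the multiplicative statement to the classical fundamental theorem of calculus for line integrals by taking logarithms. Recall from (\ref{8}) and the definition of the multiplicative line integral that
\[
\int _Cf^*_x(x,y)^{dx}f^*_y(x,y)^{dy}=\int _Cf^*_x(x,y)^{dx}\cdot \int _Cf^*_y(x,y)^{dy}
=e^{\int _C\ln f^*_x(x,y)\,dx+\int _C\ln f^*_y(x,y)\,dy}.
\]
Now the key observation is that, by the definition of the partial multiplicative derivatives of a positive function, $f^*_x=e^{[\ln f]'_x}$ and $f^*_y=e^{[\ln f]'_y}$, so $\ln f^*_x=[\ln f]'_x$ and $\ln f^*_y=[\ln f]'_y$. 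Hence the exponent above is exactly the ordinary line integral
\[
\int _C[\ln f]'_x(x,y)\,dx+[\ln f]'_y(x,y)\,dy=\int _C\nabla(\ln f)\cdot d\mathbf{r}.
\]

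Next I would invoke the classical fundamental theorem of calculus for line integrals (the gradient theorem): since $f$ is continuously $*$-differentiable on $D$, the function $\ln f$ is continuously differentiable there (this is where the hypothesis that $f$ is real-valued and nowhere zero on $D$, implicit in $*$-differentiability, is used — $\ln f$ makes sense, or one works with $\ln|f|$ on each sign-component and notes $[\ln f]'=[\ln|f|]'$), and the integrand is the gradient of $\ln f$. Therefore
\[
\int _C\nabla(\ln f)\cdot d\mathbf{r}=\ln f(x(b),y(b))-\ln f(x(a),y(a)).
\]
Exponentiating,
\[
\int _Cf^*_x(x,y)^{dx}f^*_y(x,y)^{dy}=e^{\ln f(x(b),y(b))-\ln f(x(a),y(a))}=\frac{f(x(b),y(b))}{f(x(a),y(a))},
\]
which is the claimed identity.

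I do not expect any serious obstacle; the argument is essentially ``$*$-calculus is $e$-conjugate to ordinary calculus'' applied to line integrals. The one point requiring a little care is the sign issue with $\ln f$ when $f$ is negative: strictly speaking one should phrase everything through $\ln|f|$, using that a continuous nowhere-vanishing real-valued function on the connected set $D$ is of one sign (or, more carefully, is of one sign on each piece of $C$ it is possible to cross only through points where $C$ leaves $D$), so that $\ln|f|$ is a genuine antiderivative of the integrand and $|f(x(b),y(b))|/|f(x(a),y(a))|$ already carries the correct sign information to recover $f(x(b),y(b))/f(x(a),y(a))$. Alternatively, one avoids this entirely by noting that the classical gradient theorem applies verbatim to the $C^1$ function $\ln|f|$ and that $f^*_x=e^{[\ln|f|]'_x}$, $f^*_y=e^{[\ln|f|]'_y}$ by the real-variable results recalled in Section 2, and that $f(x(b),y(b))/f(x(a),y(a))>0$ since $f$ has constant sign along $C$. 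Everything else is a routine translation between the multiplicative and additive settings via the exponential.
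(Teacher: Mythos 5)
Your proof is correct and follows essentially the same route as the paper: rewrite the multiplicative line integral as $e^{\int_C(\ln f^*_x\,dx+\ln f^*_y\,dy)}$ via (\ref{8}), identify the exponent with $\int_C([\ln f]'_x\,dx+[\ln f]'_y\,dy)$, and apply the classical gradient theorem before exponentiating. Your extra remark about working with $\ln|f|$ when $f$ may be negative is a small point of added care that the paper's proof passes over silently, but it does not change the argument.
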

 \begin{proof}
From the fundamental theorem of calculus for line integrals,
 \begin{align*}
 \int _Cf^*_x(x,y)^{dx}f^*_y(x,y)^{dy}
  & =e^{\int _C(\ln f^*_x(x,y)\,dx+\ln f^*_y(x,y)\,dy)}\\
  & =e^{\int _C([\ln f]'_x(x,y)\,dx+[\ln f]'_y(x,y)\,dy)}\\
  & =e^{\ln f(x(b),y(b))-\ln f(x(a),y(a))}\\
  & =f(x(b),y(b))/f(x(a),y(a)),
 \end{align*}
proving the theorem.
 \end{proof}

As far as line *integrals are concerned, we can present another fundamental theorem of *calculus related to line
*integrals, that is the Green's theorem in *form. Let $f$ be a bounded positive function $f$, defined
on the Jordan set $D\subseteq \mathbb{R}^2$. Let $\mathcal{Q}=\{D_k:k=1,\ldots ,m\} $ be a partition of $D$. Take any
$(\xi _k,\eta _k)\in D_k$ and let $A_k$ be the area of $D_k$. Define the integral product
 \[
 P(f,\mathcal{Q})=\prod _{k=1}^mf(x(t),y(t))^{A_k}.
 \]
The limit of this product when $\max \{ A_1,\ldots ,A_m\} \to 0$ independently on selection of the points $(\xi _k,\eta
_k)$ will be called a \emph{double *integral of $f$ on $D$}, for which we will use the symbol
 \[
 \int \!\!\int _Df(x,y)^{dA}.
 \]
A relation between double integrals and double *integrals can be easily derived as
 \[
 \int \!\!\int _Df(x,y)^{dA}=e^{\int \!\!\int _Df(x,y)dA}.
 \]
The following properties of double *integrals can also be proved easily:
 \begin{align*}
 {\rm (a)} &\ \int \!\!\int _D(f(x,y)^p)^{dA}=\Big( \int \!\!\int _Df(x,y)^{dA}\Big) ^p,\ p\in \mathbb{R},
             \ \ \ \ \ \ \ \ \ \ \ \ \ \ \ \ \ \ \ \ \ \ \ \ \ \ \ \ \ \ \ \ \ \ \ \ \ \ \ \\
 {\rm (b)} &\ \int \!\!\int _D(f(x,y)g(x,y))^{dA}=\int \!\!\int _Df(x,y)^{dA}\cdot \int \!\!\int _Dg(x,y)^{dA},\\
 {\rm (c)} &\ \int \!\!\int _D(f(x,y)/g(x,y))^{dA}=\int \!\!\int _Df(x,y)^{dA}\big/ \int \!\!\int _Dg(x,y)^{dA},\\
 {\rm (d)} &\ \int \!\!\int _Df(x,y)^{dA}=\int \!\!\int _{D_1}f(x,y)^{dA}\cdot \int \!\!\int _{D_2}f(x,y)^{dA},\ D=D_1+D_2,
 \end{align*}
where $D=D_1+D_2$ means that $D_1$ and $D_2$ are two non-overlapping Jordan sets with $D_1\cup D_2=D$.
 \begin{theorem}[Green's theorem in *form]
 \label{T4}
Let $f$ and $g$ be real-valued continuously *differentiable functions on a simply connected Jordan set $D\subseteq
\mathbb{R}^2$ with the piecewise smooth and positively oriented boundary $C$. Then
 \[
 \oint _Cf(x,y)^{dx}g(x,y)^{dy}=\int \!\! \int _D(g^*_x(x,y)/f^*_y(x,y))^{dA}.
 \]
 \end{theorem}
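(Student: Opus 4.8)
The plan is to reduce the *form of Green's theorem to the classical Green's theorem via the exponential–logarithm correspondence between line/double *integrals and ordinary line/double integrals, exactly as was done for Theorem \ref{T3}. First I would rewrite the left-hand side using the relation $\int_Cf(x,y)^{dx}=e^{\int_C\ln f(x,y)\,dx}$ from (\ref{8}) together with the definition of the combined symbol $\int_Cf^{dx}g^{dy}$, obtaining
\[
\oint_Cf(x,y)^{dx}g(x,y)^{dy}=e^{\oint_C\left(\ln f(x,y)\,dx+\ln g(x,y)\,dy\right)}.
\]
Since $f$ and $g$ are continuously *differentiable and positive, $\ln f$ and $\ln g$ are continuously differentiable in the ordinary sense on $D$, so the classical Green's theorem applies to the right-hand exponent:
\[
\oint_C\left(\ln f\,dx+\ln g\,dy\right)=\int\!\!\int_D\left([\ln g]'_x(x,y)-[\ln f]'_y(x,y)\right)dA.
\]

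Next I would translate the integrand back into *derivative notation. By the definition of the partial *derivatives, $g^*_x=e^{[\ln g]'_x}$ and $f^*_y=e^{[\ln f]'_y}$, hence $[\ln g]'_x-[\ln f]'_y=\ln\!\big(g^*_x/f^*_y\big)$, so the double integral above is $\int\!\!\int_D\ln\big(g^*_x(x,y)/f^*_y(x,y)\big)\,dA$. Finally, using the relation $\int\!\!\int_Dh(x,y)^{dA}=e^{\int\!\!\int_Dh(x,y)\,dA}$ between double *integrals and double integrals (with $h=g^*_x/f^*_y$, which is positive and continuous), I would fold the exponential back in to get
\[
e^{\int\!\!\int_D\ln(g^*_x/f^*_y)\,dA}=\int\!\!\int_D\big(g^*_x(x,y)/f^*_y(x,y)\big)^{dA},
\]
which completes the chain of equalities and proves the theorem.

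I do not expect a serious obstacle here, since the argument is a transparent conjugation of the classical Green's theorem by $\exp$. The only point requiring a moment's care is confirming that the hypotheses transfer cleanly: that "continuously *differentiable" for the positive function $f$ does indeed mean $\ln f\in C^1(D)$ (immediate from $f>0$ and the chain rule, since $f^*_x=e^{[\ln f]'_x}$ presupposes $[\ln f]'_x$ exists and is continuous), and that $g^*_x/f^*_y$ is continuous so that the passage to the double *integral is legitimate. Both are routine. One should also note in passing that the positivity and simple-connectedness hypotheses, together with the piecewise smooth positively oriented boundary $C$, are precisely what the classical Green's theorem needs, so nothing extra is lost in the *version.
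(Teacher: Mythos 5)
Your proof is correct and is essentially identical to the paper's: both conjugate the classical Green's theorem by the exponential, using $\oint_Cf^{dx}g^{dy}=e^{\oint_C(\ln f\,dx+\ln g\,dy)}$, the identities $\ln g^*_x=[\ln g]'_x$ and $\ln f^*_y=[\ln f]'_y$, and the relation between double integrals and double *integrals. No differences worth noting.
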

 \begin{proof}
From the Green's theorem,
 \begin{align*}
 \oint _Cf(x,y)^{dx}g(x,y)^{dy}
  & =e^{\oint _C(\ln f(x,y)\,dx+\ln g(x,y)\,dy)}\\
  & =e^{\int \!\!\int _D([\ln g]'_x(x,y)-[\ln f]'_y(x,y))\,dA}\\
  & =e^{\int \!\!\int _D(\ln g^*_x(x,y)-\ln f^*_y(x,y))\,dA}\\
  & =\int \!\!\int _D(g^*_x(x,y)/f^*_y(x,y))^{dA},
 \end{align*}
proving the theorem.
 \end{proof}


 \section{Complex Multiplicative Integration}

Let $f$ be a continuous nowhere-vanishing complex-valued function of complex variable and let $z(t)=x(t)+iy(t)$, $a\le
t\le b$, be a complex-valued function of real variable, tracing a piecewise smooth simple curve $C$ in the open
connected domain $D$ of $f$. The complex *integral of $f$ along $C$ will heavily use $\log f$. In order to represent
$\log f$ as the composition of branches of $\log $ and $f$ along the whole curve $C$ we will use a ``method of
localization" from Sarason \cite{S}. In this section we will consider a simple case assuming that the length of the
interval $[a,b]$ is sufficiently small so that all the values of $f(z(t))$ for $a\le t\le b$ fall into an open half
plane bounded by a line through the origin. Under this condition the restriction of $\log f$ to $C$ can be treated as a
composition of the branches of $\log $ and the restriction of $f$ to $C$. Moreover, we can select one of the
multi-values of $\log f(z(a))$ and consider a branch $\mathcal{L}$ of $\log $ so that $\mathcal{L}(f(z(a)))$ equals to
this preassigned value. Thus
 \[
 \log f(z(t))=\mathcal{L}(f(z(t)))+2\pi ni,\ a\le t\le b,\ n\in \mathbb{Z}.
 \]

Now, take a partition $\mathcal{P}=\{ z_0,\ldots ,z_m\} $ on $C$ and let $\zeta _k$ be a point on $C$ between $z_{k-1}$
and $z_k$. Denote $\Delta z_k=z_k-z_{k-1}$. Consider the integral product $\prod _{k=1}^me^{\Delta z_k\log f(\zeta
_k)}$. It can be evaluated as
 \begin{align*}
 \prod _{k=1}^me^{\Delta z_k\log f(\zeta _k)}
  &=e^{\sum _{k=1}^m(\mathcal{L}(f(\zeta _k))+2\pi ni)\Delta z_k}\\
  &=e^{2\pi n(z(b)-z(a))i}e^{\sum _{k=1}^m\mathcal{L}(f(\zeta _k))\Delta z_k},\ n\in \mathbb{Z},
 \end{align*}
showing that $\prod _{k=1}^me^{\Delta z_k\log f(\zeta _k)}$ has more than one value. Let
 \begin{equation}
 \label{9}
 P_0(f,\mathcal{P})=e^{\sum _{k=1}^m\mathcal{L}(f(\zeta _k))\Delta z_k}
 \end{equation}
and let
 \begin{equation}
 \label{10}
 P_n(f,\mathcal{P})=e^{2\pi n(z(b)-z(a))i}P_0(f,\mathcal{P}),\ n\in \mathbb{Z}.
 \end{equation}
The limit of $P_0(f,\mathcal{P})$ as $\max \{ |\Delta z_1|,\ldots ,|\Delta z_m|\} \to 0$ independently on selection of
the points $\zeta _k$ will be called a \emph{branch value of the complex *integral of $f$ along $C$} and it will be
denoted by $I^*_0(f,C)$. Then \emph{the complex *integral of $f$ along $C$} can be defined as the multiple values
 \begin{equation}
 \label{11}
 I^*_n(f,C)=e^{2\pi n(z(b)-z(a))i}I^*_0(f,C),\ n\in \mathbb{Z},
 \end{equation}
which will be denoted by
 \[
 \int _Cf(z)^{dz}.
 \]
Note that if $z(b)-z(a)$ is an integer, then all the values of $\int _Cf(z)^{dz}$ equal to $I^*_0(f,C)$, i.e.,
$I^*(f,C)$ become single-valued. If $z(b)-z(a)$ is a rational number in the form $p/q$, where $p$ and $q$ are
irreducible integers, then $\int _Cf(z)^{dz}$ has $q$ distinct values
 \[
 e^{2\pi npi/q}I^*_0(f,C),\ n=0,1,\ldots ,q-1 .
 \]
Generally, $\int _Cf(z)^{dz}$ has countably many distinct values. In case if $z(b)-z(a)$ is a real number, we also have
$|I^*_n(f,C)|=|I^*_0(f,C)|$ for all $n$. Similarly, if $z(b)-z(a)$ is an imaginary number, $\mathrm{Arg}\,
I^*_n(f,C)=\mathrm{Arg}\, I^*_0(f,C)$ for all $n$.

The existence of the complex *integral of $f$ can be reduced to the existence of line *integrals in the following way.
Let $R(z)=|f(z)|$ and $\Theta (z)=\mathrm{Im}\,\mathcal{L}(f(z))$ for $z\in C$. Denote $z=x+iy$ and $\Delta z_k=\Delta
x_k+i\Delta y_k$. Then from (\ref{9}),
 \begin{align*}
 P_0(f,\mathcal{P})
  &=e^{\sum _{k=1}^m\mathcal{L}(f(\zeta _k))\Delta z_k}\\
  &=e^{\sum _{k=1}^m(\ln R(\zeta _k)+i\Theta (\zeta _k))(\Delta x_k+i\Delta y_k)}\\
  &=e^{\sum _{k=1}^m(\ln R(\zeta _k)\Delta x_k-\Theta (\zeta _k)\Delta y_k)+i\sum _{k=1}^m(\Theta (\zeta _k)\Delta x_k+\ln R(\zeta _k)\Delta y_k)}.
 \end{align*}
If the limits of the sums in the last expression exist, then they are line integrals, producing
 \begin{equation}
 \label{12}
 I^*_0(f,C)=e^{\int _C(\ln R(z)\,dx-\Theta (z)\,dy)+i\int _C(\Theta(z)\,dx+\ln R(z)\,dy)}.
 \end{equation}
Additionally, by Example \ref{E8},
 \[
 e^{2\pi n(z(b)-z(a))i}=e^{2\pi n(-(y(b)-y(a))+i(x(b)-x(a)))}=e^{-\int _C2\pi n\,dy+i\int _C2\pi n\,dx}.
 \]
By (\ref{11})--(\ref{12}), this implies
 \begin{equation}
 \label{13}
 I^*_n(f,C)=e^{\int _C(\ln R(z)\,dx-(\Theta (z)+2\pi n)\,dy)+i\int _C((\Theta(z)+2\pi n)\,dx+\ln R(z)\,dy)}
 \end{equation}
for $n\in \mathbb{Z}$ or, in the multi-valued form,
 \[
 \int _Cf(z)^{dz}=e^{\int _C\log f(z)\, dz},
 \]
in which
 \begin{equation}
 \label{14}
 I^*_0(f,C)=e^{\int _C\mathcal{L}(f(z))\,dz}\ \ \text{and}\ \ I^*_n(f,C)=e^{2\pi n(z(b)-z(a))i}I^*_0(f,C).
 \end{equation}

To write (\ref{13}) in terms of line *integrals, note that by (\ref{8})
 \[
 |I^*_n(f,C)| =e^{\int _C(\ln R(z)\,dx-(\Theta (z)+2\pi n)\,dy)}=\int _CR(z)^{dx}\big( e^{-\Theta (z)-2\pi n}\big) ^{dy}
 \]
and
 \begin{align*}
 \arg I^*_n(f,C)
  & =\int _C((\Theta(z)+2\pi n)\,dx+\ln R(z)\,dy)+2\pi m\\
  & =\ln \int _C\big( e^{\Theta (z)+2\pi n}\big) ^{dx}R(z)^{dy}+2\pi m.
 \end{align*}
Hence,
 \begin{equation}
 \label{15}
 I^*_n(f,C)=\int _CR(z)^{dx}\big( e^{-\Theta (z)-2\pi n}\big) ^{dy}e^{i\ln \int _C\left( e^{\Theta (z)+2\pi n}\right) ^{dx}R(z)^{dy}}
 \end{equation}
for $n\in \mathbb{Z}$. Thus, the conditions, imposed at the beginning of this section, namely, (a) $f$ is
nowhere-vanishing and continuous on the open connected set $D$, (b) $C$ is a piecewise simple smooth curve in $D$, and
(c) $\{ f(z(t)):a\le t \le b\} $ falls into an open half plane bounded by a line through the origin, guarantee the
existence of $\int _Cf(z)^{dz}$ as multiple value.

The following proposition will be used for justifying the correctness of the definition of the complex *integral for
arbitrary interval $[a,b]$ in the next section.
 \begin{proposition}[1st multiplicative property, local]
 \label{P2}
Let $f$ be a nowhere-vanishing continuous function, defined on an open connected set $D$, and let $C=\{
z(t)=x(t)+iy(t):a\le t\le b\} $ be a piecewise smooth simple curve in $D$ with the property that the set $\{
f(z(t)):a\le t\le b\} $ falls into an open half plane bounded by a line through origin. Take any $a<c<b$ and let
$C_1=\{ z(t)=x(t)+iy(t):a\le t\le c\} $ and $C_2=\{ z(t)=x(t)+iy(t):c\le t\le b\} $. Then
 \[
 \int _Cf(z)^{dz}=\int _{C_1}f(z)^{dz}\int _{C_2}f(z)^{dz},
 \]
where the equality is understood in the sense that
 \[
 I^*_n(f,C)=I^*_n(f,C_1)I^*_n(f,C_2)\ \ \text{for all}\ \ n\in \mathbb{Z}
 \]
with the same branch $\mathcal{L}$ of $\log $ used for $I_0(f,C)$, $I_0(f,C_1)$ and $I_0(f,C_2)$.
 \end{proposition}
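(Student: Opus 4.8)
The plan is to reduce everything to the additivity of finite sums (equivalently, of complex line integrals) once a single branch $\mathcal{L}$ of $\log$ has been fixed along all of $C$. The first observation is that, since the set $\{f(z(t)):a\le t\le b\}$ lies in one open half-plane bounded by a line through the origin, the very branch $\mathcal{L}$ of $\log$ used to define $I^*_0(f,C)$ through $(\ref{9})$ is also admissible for the subcurves $C_1$ and $C_2$, because their images under $f$ are subsets of $\{f(z(t)):a\le t\le b\}$. This is precisely what the phrase ``with the same branch $\mathcal{L}$'' in the statement is meant to pin down, and it is what makes the asserted equality a genuine equality of preassigned branch values rather than of unrelated multi-values. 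So I would begin by fixing such an $\mathcal{L}$ and agreeing that $I^*_0(f,C)$, $I^*_0(f,C_1)$, $I^*_0(f,C_2)$ are all computed from it.

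Next I would establish the case $n=0$ straight from the definition of $I^*_0$. Given a partition $\mathcal{P}=\{z_0,\ldots,z_m\}$ of $C$ that contains the point $z(c)$ as a node, write $\mathcal{P}=\mathcal{P}_1\cup\mathcal{P}_2$, where $\mathcal{P}_1$ and $\mathcal{P}_2$ are the induced partitions of $C_1$ and $C_2$. Since $\Delta z_k=z_k-z_{k-1}$ for every $k$, the exponent $\sum_{k=1}^m\mathcal{L}(f(\zeta_k))\Delta z_k$ in $(\ref{9})$ splits as the sum of the corresponding expressions over $\mathcal{P}_1$ and over $\mathcal{P}_2$, whence $P_0(f,\mathcal{P})=P_0(f,\mathcal{P}_1)\,P_0(f,\mathcal{P}_2)$. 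Because the limit defining $I^*_0(f,C)$ does not depend on how the partition is chosen, we may restrict attention to partitions having $z(c)$ as a node; as $\max_k|\Delta z_k|\to 0$ the meshes of $\mathcal{P}_1$ and $\mathcal{P}_2$ also tend to $0$, and conversely any pair of fine partitions of $C_1,C_2$ joins to a fine partition of $C$. Passing to the limit therefore yields $I^*_0(f,C)=I^*_0(f,C_1)\,I^*_0(f,C_2)$. (Alternatively, one may invoke $(\ref{14})$ together with the additivity $\int_C\mathcal{L}(f(z))\,dz=\int_{C_1}\mathcal{L}(f(z))\,dz+\int_{C_2}\mathcal{L}(f(z))\,dz$ of the complex line integral over a concatenated path.)

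Finally I would treat general $n\in\mathbb{Z}$ by combining the case $n=0$ with the twisting factors of $(\ref{11})$. By definition $I^*_n(f,C_1)=e^{2\pi n(z(c)-z(a))i}I^*_0(f,C_1)$ and $I^*_n(f,C_2)=e^{2\pi n(z(b)-z(c))i}I^*_0(f,C_2)$, hence
\begin{align*}
I^*_n(f,C_1)\,I^*_n(f,C_2)
 &=e^{2\pi n[(z(c)-z(a))+(z(b)-z(c))]i}\,I^*_0(f,C_1)\,I^*_0(f,C_2)\\
 &=e^{2\pi n(z(b)-z(a))i}\,I^*_0(f,C)=I^*_n(f,C),
\end{align*}
where beyond the $n=0$ identity one only uses the telescoping $(z(c)-z(a))+(z(b)-z(c))=z(b)-z(a)$.

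There is no deep obstacle here; the single point that demands care — and the reason the statement is phrased as it is — is the logarithm bookkeeping. In the preceding construction the branch values $I^*_0(f,C_1)$ and $I^*_0(f,C_2)$ each rest on an a priori free choice of the initial value of $\log f$, so the factorization can hold only for the coherent choice in which all three are read off from one and the same $\mathcal{L}$; the half-plane hypothesis on $C$ is exactly what guarantees that such a coherent $\mathcal{L}$ exists. Once this is made explicit, the proof is the additivity of finite sums plus one exponent identity.
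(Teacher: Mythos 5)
Your proposal is correct and rests on the same substance as the paper's own (one-line) proof: the paper invokes formula (13) and the additivity of line integrals over the concatenation $C=C_1+C_2$, which is exactly your additivity-of-sums argument plus the telescoping of the factors $e^{2\pi n(z(c)-z(a))i}e^{2\pi n(z(b)-z(c))i}=e^{2\pi n(z(b)-z(a))i}$. Your version merely spells out the branch bookkeeping and the $n=0$ reduction that the paper leaves implicit.
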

 \begin{proof}
This follows immediately from (\ref{13}) and the respective property of line integrals.
 \end{proof}

Next, we consider a *analog of the fundamental theorem of complex calculus in a local form.
 \begin{proposition}[Fundamental theorem of complex *calculus, local]
 \label{P3}
Let $f$ be a nowhere-vanishing *holomorphic function, defined on an open connected set $D$, and let $C=\{
z(t)=x(t)+iy(t):a\le t\le b\} $ be a piecewise smooth simple curve in $D$ with the property that the set $\{
f(z(t)):a\le t\le b\} $ falls into an open half plane bounded by a line through origin. Then
 \[
 \int _Cf^*(z)^{dz}=\big\{ e^{2\pi n(z(b)-z(a))i}f(z(b))/f(z(a)): n\in \mathbb{Z} \big\} .
 \]
 \end{proposition}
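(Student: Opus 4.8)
The plan is to reduce the *integral of $f^*$ along $C$ to an ordinary complex contour integral via formula (\ref{14}), and then apply the ordinary fundamental theorem of complex calculus to the integrand, which will turn out to be essentially $\log f$ itself. The key observation is that $f^*(z) = e^{f'(z)/f(z)}$, so that for a suitable branch $\mathcal{L}$ of $\log$ we can hope to recover $\mathcal{L}(f^*(z)) = f'(z)/f(z) = [\log f]'(z)$, which is exactly the derivative (along any local branch) of $\log f$. Once this identification is in place, the line integral $\int_C \mathcal{L}(f^*(z))\,dz = \int_C [\log f]'(z)\,dz$ evaluates by the ordinary fundamental theorem of complex calculus to the difference of the values of a branch of $\log f$ at the endpoints, i.e. $\log f(z(b)) - \log f(z(a))$, and exponentiating gives the ratio $f(z(b))/f(z(a))$. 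The $2\pi n(z(b)-z(a))i$ factors then come for free from (\ref{14}).

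The main steps, in order, are as follows. \textbf{Step 1.} Using the localization hypothesis, fix a branch $\mathcal{L}$ of $\log$ so that $\log f(z(t)) = \mathcal{L}(f(z(t))) + 2\pi n i$ along $C$, as set up at the start of the section; since $f$ is *holomorphic, $f'/f$ is continuous on a neighborhood of $C$, so by shrinking the half-plane if necessary we may also arrange that the values $f^*(z(t)) = e^{f'(z(t))/f(z(t))}$ lie in an open half-plane bounded by a line through the origin, and choose a branch $\mathcal{L}'$ of $\log$ with $\mathcal{L}'(f^*(z(t))) = f'(z(t))/f(z(t))$ along $C$. (This is the content of the remark in the properties section that $\mathcal{L}e^w = w$ holds for the branch of $\log$ whose strip contains $\operatorname{Im} w$.) \textbf{Step 2.} Apply (\ref{14}) with $f$ replaced by $f^*$ and branch $\mathcal{L}'$: $I^*_0(f^*,C) = e^{\int_C \mathcal{L}'(f^*(z))\,dz} = e^{\int_C (f'(z)/f(z))\,dz}$. \textbf{Step 3.} Since $f$ is differentiable and nowhere-vanishing, $\mathcal{L}(f(z(t)))$ is a continuously differentiable function of $t$ along $C$ whose derivative is $f'(z(t))z'(t)/f(z(t))$ (the log-differentiation formula, valid locally by Remark \ref{R5}); hence by the ordinary fundamental theorem of calculus $\int_C (f'(z)/f(z))\,dz = \mathcal{L}(f(z(b))) - \mathcal{L}(f(z(a)))$. \textbf{Step 4.} Exponentiate: $I^*_0(f^*,C) = e^{\mathcal{L}(f(z(b)))}/e^{\mathcal{L}(f(z(a)))} = f(z(b))/f(z(a))$. \textbf{Step 5.} By (\ref{14}) (or (\ref{11})), $I^*_n(f^*,C) = e^{2\pi n(z(b)-z(a))i} I^*_0(f^*,C) = e^{2\pi n(z(b)-z(a))i} f(z(b))/f(z(a))$, and collecting over $n \in \mathbb{Z}$ gives the asserted set.

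\textbf{The main obstacle} I anticipate is Step 1: justifying that $\mathcal{L}'(f^*(z)) = f'(z)/f(z)$ holds \emph{uniformly} along the whole curve with a \emph{single} branch $\mathcal{L}'$. A priori the values $f'(z(t))/f(z(t))$ range over some compact connected subset of $\mathbb{C}$, and $e^{f'/f}$ maps this into the half-plane hypothesized for applying the *integral definition to $f^*$; but to write $\mathcal{L}'(e^{f'/f}) = f'/f$ we need $\operatorname{Im}(f'(z(t))/f(z(t)))$ to stay within a single strip of width $2\pi$ as $t$ ranges over $[a,b]$. This is not automatic from the hypotheses as stated, so one must invoke the ``method of localization'' once more: partition $[a,b]$ into finitely many subintervals on each of which both the half-plane condition for $f$ \emph{and} the strip condition for $f'/f$ hold, prove the identity on each piece, and glue using Proposition \ref{P2} (the local first multiplicative property) together with the telescoping of the endpoint ratios. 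Strictly, then, the ``local'' in the statement should be read as covering this finite subdivision; the global version with full localization is what the next section presumably establishes. A secondary, purely bookkeeping point is keeping the $2\pi n(z(b)-z(a))i$ factor attached correctly through (\ref{11})--(\ref{14}), which is routine given Proposition \ref{P2}.
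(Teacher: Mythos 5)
Your proof is correct in substance, but it takes a genuinely different route from the paper's. The paper never touches the complex line integral $\int_C f'(z)/f(z)\,dz$: it starts from the real-variable representation (\ref{15}) of $I^*_n(f^*,C)$ as line *integrals of $|f^*|$ and $e^{\arg f^*}$ in $dx$ and $dy$, substitutes the Cauchy--Riemann *conditions of Theorem \ref{T1} to recognize the integrands as the partial *derivatives $R^*_x$, $R^*_y$, $[e^{\Theta}]^*_x$, $[e^{\Theta}]^*_y$, and then applies Theorem \ref{T3} (the real fundamental theorem for line *integrals) twice, once for the modulus and once for the argument, with Example \ref{E8} supplying the $e^{2\pi n(z(b)-z(a))i}$ factor. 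You instead use (\ref{14}) together with the identification $\mathcal{L}'(f^*(z))=f'(z)/f(z)$ to reduce everything to $e^{\int_C [\log f]'(z)\,dz}$ and the ordinary fundamental theorem of complex calculus. Your version is shorter and makes the answer conceptually transparent (the integral is $e^{\Delta\log f}$); the paper's version stays entirely inside the line-*integral machinery it has just built and thereby exhibits Theorem \ref{T1} and Theorem \ref{T3} doing real work. The obstacle you flag in Step 1 --- that a single branch $\mathcal{L}'$ with $\mathcal{L}'(e^{w})=w$ requires $\mathrm{Im}\,(f'/f)$ to stay in one strip of width $2\pi$ along $C$ --- is genuine, but it is not specific to your argument: the paper's proof tacitly assumes the same thing when it applies (\ref{15}) to $f^*$ (which presupposes $f^*(C)$ lies in a half plane) and then replaces $\arg f^*$ by the continuous selection $\Theta'_x$; two continuous branch-value selections along a connected curve differ by a constant multiple of $2\pi$, so this only relabels $n$ and is harmless for the asserted set equality. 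Your subdivide-and-glue repair via Proposition \ref{P2} is the correct way to make either proof airtight, and it is exactly what the general definition in the following section formalizes.
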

 \begin{proof}
From (\ref{15}),
 \[
 \int _Cf^*(z)^{dz}=\int _C|f^*(z)|^{dx}\big( e^{-\arg f^*(z)}\big) ^{dy}\ e^{i\ln \int _C\left( e^{\arg f^*(z)}\right) ^{dx}|f^*(z)|^{dy}}.
 \]
Using (\ref{7}),
 \begin{align*}
 I_n^*(f^*,C)
  &=\int _CR^*_x(z)^{dx}\big( e^{[\ln R]'_y(z)-2\pi n}\big) ^{dy}\ e^{i\ln \int _C\left( e^{\Theta '_x(z)+2\pi n}\right) ^{dx}
    \left[ e^\Theta \right] ^*_y(z)^{dy}}\\
  &=\int _CR^*_x(z)^{dx}R^*_y(z)^{dy}\ e^{i\ln \int _C\left[ e^\Theta \right] ^*_x(z)^{dx}\left[ e^{\Theta }\right] ^*_y(z)^{dy}}\\
  &\ \ \ \times  \int _C\big( e^{-2\pi n}\big) ^{dy}\ e^{i\ln \int _C(e^{2\pi n})^{dx}}.
 \end{align*}
By Theorem \ref{T3},
 \[
 \int _CR^*_x(z)^{dx}R^*_y(z)^{dy}\ e^{i\ln \int _C\left[ e^\Theta \right] ^*_x(z)^{dx}\left[ e^{\Theta }\right] ^*_y(z)^{dy}}
 =\frac{R(z(b))e^{i\Theta (z(b))}}{R(z(a))e^{i\Theta (z(a))}}=\frac{f(z(b))}{f(z(a))},
 \]
and, by Example \ref{E8},
 \[
 \int _C\big( e^{-2\pi n}\big) ^{dy}\ e^{i\ln \int _C(e^{2\pi n})^{dx}}=e^{2\pi n(-(y(b)+y(a))+i(x(b)-x(a)))}=e^{2\pi n(z(b)-z(a))i}.
 \]
Thus, the proposition is proved.
 \end{proof}


 \section{Complex Multiplicative Integration (Continued)}

The definition of complex *integral from the previous section does not cover the important closed curves about the
origin, say, unit circle centered at the origin, because such curves do not fall into any open half plane bounded by a
line through origin. In this section this restriction will be removed.
 \begin{lemma}
 \label{L1}
Let $f$ be a continuous nowhere-vanishing function, defined on an open connected set $D$, and let $C=\{
z(t)=x(t)+iy(t):a\le t\le b\} $ be a piecewise smooth simple curve in $D$. Then there exists a partition
$\mathcal{P}=\{ t_0,t_1,\ldots ,t_m\} $ of $[a,b]$ such that each of the sets $\{ f(z(t)):t_{k-1}\le t\le t_k\} $,
$k=1,\ldots ,m$, falls into an open half plane bounded by a line through origin.
 \end{lemma}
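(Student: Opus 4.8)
The plan is to reduce the assertion to a uniform-continuity argument in the single real variable $t$. Put $g(t)=f(z(t))$ for $a\le t\le b$. Since $z$ is continuous (it is piecewise smooth) and $f$ is continuous on $D$, the composite $g$ is continuous on the compact interval $[a,b]$, and since $f$ is nowhere-vanishing, $g$ is nowhere zero on $[a,b]$. First I would note that $|g|$ is continuous on $[a,b]$ and therefore attains a strictly positive minimum $m=\min_{a\le t\le b}|g(t)|>0$. Then, by uniform continuity of $g$ on $[a,b]$, there is $\delta>0$ such that $|t-s|<\delta$ implies $|g(t)-g(s)|<m$. I would then take any partition $\mathcal{P}=\{t_0,t_1,\ldots,t_m\}$ of $[a,b]$ with mesh smaller than $\delta$.

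The geometric heart of the argument is the elementary observation that for every $w\in\mathbb{C}\setminus\{0\}$ the open disc $B(w,|w|)=\{\zeta\in\mathbb{C}:|\zeta-w|<|w|\}$ is contained in the open half plane $H_w=\{\zeta\in\mathbb{C}:\mathrm{Re}(\bar w\zeta)>0\}$, whose boundary is precisely the line through the origin orthogonal to $w$. Indeed, if $|\zeta-w|<|w|$ then $\mathrm{Re}(\bar w\zeta)=|w|^2-\mathrm{Re}(\bar w(w-\zeta))\ge |w|^2-|w|\,|w-\zeta|>0$. Granting this, on each subinterval $[t_{k-1},t_k]$ choose any point $\tau_k$; for every $t\in[t_{k-1},t_k]$ we have $|t-\tau_k|<\delta$, hence $|g(t)-g(\tau_k)|<m\le|g(\tau_k)|$, so $g(t)\in B(g(\tau_k),|g(\tau_k)|)\subseteq H_{g(\tau_k)}$. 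Therefore $\{f(z(t)):t_{k-1}\le t\le t_k\}\subseteq H_{g(\tau_k)}$, an open half plane bounded by a line through the origin, for each $k=1,\ldots,m$, which is the claim.

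I do not expect a genuine obstacle here: the proof rests only on compactness of $[a,b]$ together with the continuity and non-vanishing of $g$, and piecewise smoothness of $C$ enters solely to guarantee that $z$, and hence $g$, is continuous. The single point deserving a line of care is the containment $B(w,|w|)\subseteq H_w$, for which the one-line estimate above suffices; everything else is routine. If one prefers to phrase the conclusion so that the bounding lines are exhibited explicitly, one may record that the line $\ell_k=\{\zeta\in\mathbb{C}:\mathrm{Re}(\overline{g(\tau_k)}\,\zeta)=0\}$ works for the $k$th piece.
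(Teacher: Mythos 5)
Your proof is correct, and it takes a slightly different (and in one respect tighter) route than the paper's. The paper argues locally: at each $t$ it takes the line $L_t$ through the origin perpendicular to the direction of $f(z(t))$, uses continuity to find an interval around $t$ on which the values of $f\circ z$ stay in one open half plane bounded by $L_t$, extracts a finite subcover of $[a,b]$ by compactness, and declares that the endpoints of the covering intervals, sorted, give the partition. You instead make the compactness quantitative: you extract a uniform scale $\delta$ from uniform continuity together with the strictly positive minimum of $|f\circ z|$, and then any partition of mesh less than $\delta$ works, via the clean containment $B(w,|w|)\subseteq\{\zeta:\mathrm{Re}(\bar w\zeta)>0\}$. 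The two arguments share the same geometric core (points close to $w$ lie in the open half plane perpendicular to $w$), but your version buys two things: it shows that \emph{every} sufficiently fine partition works, not just one built from a particular subcover, and it avoids the small gap in the paper's last step, where one still has to check that each subinterval determined by the sorted endpoints actually lies inside one of the covering intervals (true, but it deserves a sentence or an appeal to the Lebesgue number lemma, which is essentially what your $\delta$ is). One cosmetic remark: you use $m$ both for $\min_{a\le t\le b}|g(t)|$ and for the number of partition points; rename one of them.
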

 \begin{proof}
To every $t\in [a,b]$, consider $\theta _t=\mathrm{Arg}\,f(z(t))$ and the line $L_t$ formed by the rays $\theta =\theta
_t+\pi/2$ and $\theta =\theta _t-\pi/2$. Since $f$ is continuous and nowhere-vanishing, there is an interval
$(t-\varepsilon ,t+\varepsilon )\subseteq [a,b]$ such that the set
 \[
 \{ f(z(s)):t-\varepsilon <s<t+\varepsilon \}
 \]
falls into one of the open half planes bounded by $L_t$ if $t\in (a,b)$. In case of $t=a$ such an interval can be
selected in the form $[a,a+\varepsilon )$ and in the case $t=b$ as $(b-\varepsilon ,b]$. The collection of all such
intervals forms an open cover of the compact subspace $[a,b]$ of $\mathbb{R}$. Therefore, there is a finite number of
them covering $[a,b]$. Writing the end points of these intervals in an increasing order $a=t_0<t_1<\cdots <t_m=b$
produces a required partition.
 \end{proof}

This lemma determines a way for the definition of $\int _Cf(z)^{dz}$ in the general case. Assume again that $f$ is a
continuous nowhere-vanishing complex-valued function of complex variable and $z(t)=x(t)+iy(t)$, $a\le t\le b$, is a
complex-valued function of real variable, tracing a piecewise smooth simple curve $C$ in the open connected domain $D$
of $f$. Let $\mathcal{P}=\{ t_0,t_1,\ldots ,t_m\} $ be a partition of $[a,b]$ from Lemma \ref{L1} and let $C_k=\{
z(t):t_{k-1}\le t\le t_k\} $. Choose any branch $\mathcal{L}_1$ of $\log $ and consider $\int _{C_1}f(z)^{dz}$ as
defined in the previous section. Then select a branch $\mathcal{L}_2$ of $\log $ with
$\mathcal{L}_2(f(z(t_1)))=\mathcal{L}_1(f(z(t_1)))$ and consider $\int _{C_2}f(z)^{dz}$. Next, select a branch
$\mathcal{L}_3$ of $\log $ with $\mathcal{L}_3(f(z(t_2)))=\mathcal{L}_2(f(z(t_2)))$ and consider $\int
_{C_3}f(z)^{dz}$, etc. In this process the selection of the starting branch $\mathcal{L}_1$ is free, but the other
branches $\mathcal{L}_2,\ldots ,\mathcal{L}_m$ are selected accordingly to construct a continuous single-valued
function $g$ on $[a,b]$ such that the value of $g$ at fixed $t\in [a,b]$ equals to one of the branch values of $\log
f(z(t))$. Following (\ref{14}), the \emph{complex *integral of $f$ over $C$}, that will again be denoted by $\int
_Cf(z)^{dz}$, can be defined as the multiple values
 \[
 I^*_n(f,C)=\prod _{k=1}^me^{2\pi n(z(t_k)-z(t_{k-1}))i+\int _{C_k}\mathcal{L}_k(f(z))\,dz},\ n\in \mathbb{Z},
 \]
or
 \begin{equation}
 \label{16}
 I^*_n(f,C)=e^{2\pi n(z(b)-z(a))i}e^{\sum _{k=1}^m\int _{C_k}\mathcal{L}_k(f(z))\,dz},\ n\in \mathbb{Z}.
 \end{equation}

This definition is independent on the selection of the partition $\mathcal{P}$ of $[a,b]$. Indeed, if $\mathcal{Q}$ is
another partition, being a refinement of the previous one, then the piece $C_k$ from $z(t_{k-1})$ to $z(t_k)$ of the
curve $C$ became departed into smaller non-overlapping pieces $C_{ki}$, $i=1,\ldots ,l_k$, each over the partition
intervals of $\mathcal{Q}$ falling into $[t_{k-1},t_k]$. Since the range of $f$ over $C_k$ falls into an open half
plane bounded by a line through origin, the range of $f$ over each $C_{ki}$ falls into the same half plane. Therefore,
by Proposition \ref{P2},
 \[
 I^*_n(f,C_k)=\prod _{i=1}^{l_k}I^*_n(f,C_{ki})
 \]
with the same branch $\mathcal{L}_k$ of $\log $ used for all $I_0(f,C_k)$ and $I_0(f,C_{k1}),\ldots ,I_0(f,C_{kl_k})$.
Then
 \[
 I^*_n(f,C_k)=\prod _{k=1}^{m}I^*_n(f,C_k))=\prod _{k=1}^{m}\prod _{i=1}^{l_k}I^*_n(f,C_{ki}),
 \]
i.e., both $\mathcal{P}$ and $\mathcal{Q}$ produce the same multiple values. In case if $\mathcal{P}$ and $\mathcal{Q}$
are two arbitrary partitions of $[a,b]$ from Lemma \ref{L1}, we can compare the integral for selections $\mathcal{P}$
and $\mathcal{Q}$ with the same for their refinement $\mathcal{P}\cup \mathcal{Q}$ and deduce that $I^*_n(f,C)$ is
independent on selection of $\mathcal{P}$ and $\mathcal{Q}$.


 \section{Properties of Complex Multiplicative Integrals}

 \begin{theorem}[1st multiplicative property]
 \label{T5}
Let $f$ be a nowhere-vanishing continuous function, defined on an open connected set $D$, and let $C=\{
z(t)=x(t)+iy(t):a\le t\le b\} $ be a piecewise smooth simple curve in $D$. Take any $a<c<b$ and let $C_1=\{
z(t)=x(t)+iy(t):a\le t\le c\} $ and $C_2=\{ z(t)=x(t)+iy(t):c\le t\le b\} $. Then
 \[
 \int _Cf(z)^{dz}=\int _{C_1}f(z)^{dz}\int _{C_2}f(z)^{dz}
 \]
in the sense that $I^*_n(f,C)=I^*_n(f,C_1)I^*_n(f,C_2)$, $n\in \mathbb{Z}$, where
$\mathcal{L}_{01}(f(z(a)))=\mathcal{L}_{11}(f(z(a)))$ and $\mathcal{L}_{1m_1}(f(z(c)))=\mathcal{L}_{21}(f(z(c)))$ if
$\mathcal{L}_{01},\ldots ,\mathcal{L}_{0m}$, $\mathcal{L}_{11},\ldots ,\mathcal{L}_{1m_1}$ and $\mathcal{L}_{21},\ldots
,\mathcal{L}_{2m_2}$ are the sequences of branches of $\log $ used in definition of $I^*_0(f,C)$, $I^*_0(f,C_1)$ and
$I^*_0(f,C_2)$, respectively.
 \end{theorem}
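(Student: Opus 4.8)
The plan is to reduce Theorem \ref{T5} to the already-established local version, Proposition \ref{P2}, by combining it with the partition-independence of the general definition (\ref{16}). First I would invoke Lemma \ref{L1} to obtain a partition $\mathcal{P}_1$ of $[a,c]$ adapted to $C_1$ and a partition $\mathcal{P}_2$ of $[c,b]$ adapted to $C_2$, so that on each subinterval the range of $f$ falls into an open half plane bounded by a line through the origin. Their union $\mathcal{P}=\mathcal{P}_1\cup\mathcal{P}_2$ is then a partition of $[a,b]$ of the type required by Lemma \ref{L1}, and since the definition of $\int_C f(z)^{dz}$ is independent of the choice of such a partition (as established after (\ref{16})), I may use $\mathcal{P}$ to compute $I^*_n(f,C)$.

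Next I would write out (\ref{16}) for all three curves over these compatible partitions. Writing $\mathcal{P}_1=\{t_0,\ldots,t_{m_1}\}$ with $t_0=a$, $t_{m_1}=c$ and $\mathcal{P}_2=\{t_{m_1},\ldots,t_{m_1+m_2}\}$ with $t_{m_1+m_2}=b$, the exponent sums in (\ref{16}) split cleanly: $\sum_{k=1}^{m_1+m_2}\int_{C_k}\mathcal{L}_k(f(z))\,dz = \sum_{k=1}^{m_1}\int_{C_k}\mathcal{L}_k(f(z))\,dz + \sum_{k=m_1+1}^{m_1+m_2}\int_{C_k}\mathcal{L}_k(f(z))\,dz$, and likewise the factor $e^{2\pi n(z(b)-z(a))i}$ factors as $e^{2\pi n(z(c)-z(a))i}\,e^{2\pi n(z(b)-z(c))i}$ because $z(b)-z(a)=(z(c)-z(a))+(z(b)-z(c))$. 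This immediately gives $I^*_n(f,C)=I^*_n(f,C_1)I^*_n(f,C_2)$, provided the branch sequences match up as stated.

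The branch-matching bookkeeping is the one point that needs genuine care, so I would address it explicitly. In the construction after Lemma \ref{L1} the starting branch $\mathcal{L}_{01}$ for $C$ is free; I would choose it to coincide with the starting branch $\mathcal{L}_{11}$ used for $C_1$, which is the hypothesis $\mathcal{L}_{01}(f(z(a)))=\mathcal{L}_{11}(f(z(a)))$. Because the subsequent branches $\mathcal{L}_{0,k}$ for $C$ are then forced by the continuity requirement on the glued function $g$, they agree with the forced branches $\mathcal{L}_{1,k}$ for $C_1$ on $[a,c]$ and, after the hinge condition $\mathcal{L}_{1m_1}(f(z(c)))=\mathcal{L}_{21}(f(z(c)))$, with the branches $\mathcal{L}_{2,k}$ for $C_2$ on $[c,b]$. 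Thus the three integral products are computed with literally the same local branches, so the termwise sums coincide and the factorization is exact, not merely up to the $2\pi n i$ ambiguity. The main obstacle is purely organizational: making sure the partition-independence argument and the branch-gluing conventions from Section 8 are quoted in the right order so that no circularity creeps in; once that is laid out, the computation is the elementary splitting of the two exponent sums above.
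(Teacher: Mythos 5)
Your proposal is correct and follows exactly the route the paper takes: the paper's own proof is just the one-line remark that the claim "follows from the definition of the complex *integral for a general interval and its independence on the selection of the partition from Lemma \ref{L1}," and your argument is a faithful (and more carefully detailed) expansion of that, including the branch-matching bookkeeping the paper leaves implicit.
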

 \begin{proof}
This follows from the definition of complex *integral for general interval $[a,b]$ and its independence on the
selection of partition $\mathcal{P}$ from Lemma \ref{L1}.
 \end{proof}
 \begin{theorem}[2nd multiplicative property]
 \label{T6}
Let $f$ and $g$ be nowhere-vanishing continuous functions, defined on an open connected set $D$, and let $C=\{
z(t)=x(t)+iy(t):a\le t\le b\} $ be a piecewise smooth simple curve in $D$. Then
 \[
 \int _C(f(z)g(z))^{dz}=\int _Cf(z)^{dz}\int _Cg(z)^{dz}
 \]
as a set equality, where the product of the sets $A$ and $B$ is treated as $AB=\{ ab: a\in A,\ b\in B\} $.
 \end{theorem}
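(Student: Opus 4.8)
The plan is to reduce the claimed set equality to the additivity of the ordinary complex line integral, exactly in the spirit of the proof of Theorem \ref{T4} but carried out at the level of the multi-valued *integral defined by (\ref{16}). First I would invoke Lemma \ref{L1} to choose a partition $\mathcal{P}=\{t_0,\ldots,t_m\}$ of $[a,b]$ that works \emph{simultaneously} for $f$, $g$, and $fg$: apply the lemma to each of the three functions and take the common refinement, so that for each subcurve $C_k$ the ranges of $f$, $g$, and $fg$ over $C_k$ each lie in an open half plane bounded by a line through the origin. On each such $C_k$ one may choose branches $\mathcal{L}_k^f$ and $\mathcal{L}_k^g$ of $\log$ as in Section 6, and since $fg$ takes values in a half plane there, one has $\mathcal{L}_k^{fg}:=\mathcal{L}_k^f+\mathcal{L}_k^g$ agreeing with \emph{some} branch of $\log$ on the range of $fg$ over $C_k$ up to an additive constant $2\pi i\ell_k$, $\ell_k\in\mathbb{Z}$.

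Next I would write out $I^*_n(fg,C)$ using (\ref{16}) with the branches $\mathcal{L}_k^{fg}$, and likewise $I^*_p(f,C)$ and $I^*_q(g,C)$ with $\mathcal{L}_k^f$ and $\mathcal{L}_k^g$. Using $\log fg = \log f + \log g$ on each $C_k$ together with the linearity of the ordinary line integral, $\int_{C_k}\mathcal{L}_k^{fg}(f(z)g(z))\,dz = \int_{C_k}\mathcal{L}_k^f(f(z))\,dz + \int_{C_k}\mathcal{L}_k^g(g(z))\,dz + 2\pi i\ell_k(z(t_k)-z(t_{k-1}))$. Summing over $k$ and exponentiating, the extra terms $\exp(2\pi i\sum_k \ell_k(z(t_k)-z(t_{k-1})))$ telescope into a factor of the form $e^{2\pi i\,r\,(z(b)-z(a))}$ for a suitable integer $r$ (in fact the cumulative winding discrepancy), which is precisely of the type absorbed by the index shift in (\ref{16}). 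Hence each branch value $I^*_n(fg,C)$ equals $I^*_p(f,C)I^*_q(g,C)$ for appropriate $p,q\in\mathbb{Z}$, and conversely every product $I^*_p(f,C)I^*_q(g,C)$ is realized as some $I^*_n(fg,C)$ because the factor $e^{2\pi n(z(b)-z(a))i}$ ranges over the same countable set. This gives the two inclusions $\int_C(fg)^{dz}\subseteq \int_C f^{dz}\int_C g^{dz}$ and the reverse, i.e.\ the set equality.

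The main obstacle I anticipate is bookkeeping of the branch constants: the branches $\mathcal{L}_k^f+\mathcal{L}_k^g$ are \emph{not} canonically normalized the way Section 6 normalizes branches (via $\mathcal{L}_{k+1}(f(z(t_k)))=\mathcal{L}_k(f(z(t_k)))$), so one must verify that the telescoping of the mismatch constants $2\pi i\ell_k$ genuinely produces only a factor depending on the endpoints $z(a),z(b)$ and not on the partition or the path in between — this is exactly where the $2\pi n(z(b)-z(a))i$ ambiguity in the definition (\ref{16}) does the work, and where one needs the partition-independence established at the end of Section 7. Once that is pinned down, the equality of the two \emph{sets} (as opposed to a one-branch identity) follows from the observation that for fixed representatives $I^*_0(f,C)$ and $I^*_0(g,C)$ the sets $\{e^{2\pi p(z(b)-z(a))i}\}_{p\in\mathbb{Z}}$ and $\{e^{2\pi q(z(b)-z(a))i}\}_{q\in\mathbb{Z}}$ and $\{e^{2\pi n(z(b)-z(a))i}\}_{n\in\mathbb{Z}}$ all coincide, so multiplying the first two sets of multipliers reproduces the third. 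The remaining computations are the routine line-integral manipulations already illustrated in Theorems \ref{T3} and \ref{T4}.
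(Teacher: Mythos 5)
Your proof is correct and is essentially the paper's own argument: the paper disposes of the theorem in one line by citing (\ref{16}) together with the set identity $\log (z_1z_2)=\log z_1+\log z_2$, and what you have written is exactly that identity with the branch bookkeeping made explicit (common refinement of the Lemma \ref{L1} partitions for $f$, $g$, $fg$, plus the observation that the multiplier sets $\{e^{2\pi n(z(b)-z(a))i}:n\in\mathbb{Z}\}$ form a group, so the product of two copies is the set itself). The one step worth tightening is your ``telescoping'': the sum $\sum_k\ell_k(z(t_k)-z(t_{k-1}))$ collapses to $\ell\,(z(b)-z(a))$ not by cancellation but because the junction conditions $\mathcal{L}^f_{k+1}(f(z(t_k)))=\mathcal{L}^f_k(f(z(t_k)))$ (and likewise for $g$ and $fg$) make $\lambda_f+\lambda_g$ and $\lambda_{fg}$ two continuous logarithms of $fg$ on the connected interval $[a,b]$, hence differing by a single constant $2\pi i\ell$, so that all the $\ell_k$ coincide.
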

 \begin{proof}
This follows from (\ref{16}) and the set equality $\log (z_1z_2)=\log z_1+\log z_2$, where the sum of the sets $A$ and
$B$ is treated as $A+B=\{ a+b: a\in A,\ b\in B\} $.
 \end{proof}
 \begin{theorem}[Division property]
 \label{T7}
Let $f$ and $g$ be nowhere-vanishing continuous functions, defined on an open connected set $D$, and let $C=\{
z(t)=x(t)+iy(t):a\le t\le b\} $ be a piecewise smooth simple curve in $D$. Then
 \[
 \int _C(f(z)/g(z))^{dz}=\int _Cf(z)^{dz}\big/ \int _Cg(z)^{dz}
 \]
as a set equality, where the ratio of the sets $A$ and $B$ is treated as $A/B=\{ a/b: a\in A,\ b\in B\} $.
 \end{theorem}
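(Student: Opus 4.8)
The plan is to mirror the one-line proof of Theorem \ref{T6}, but to expose the bookkeeping of branches and of the integer parameter $n$ that enters (\ref{16}), since the passage from $\log(z_1z_2)=\log z_1+\log z_2$ to $\log(z_1/z_2)=\log z_1-\log z_2$ goes through exactly the same mechanism. First I would note that $h=f/g$ is again nowhere-vanishing and continuous on $D$, so $\int_Ch(z)^{dz}$ is defined. Then I would apply Lemma \ref{L1} separately to $f$, to $g$ and to $h$ and pass to a common refinement, obtaining a single partition $\mathcal{P}=\{t_0,\dots,t_m\}$ of $[a,b]$ such that on every $C_k=\{z(t):t_{k-1}\le t\le t_k\}$ each of the three sets $\{f(z(t))\}$, $\{g(z(t))\}$, $\{h(z(t))\}$ lies in an open half plane bounded by a line through the origin; since sub-ranges of a half-plane-valued function are again half-plane-valued, the common refinement works for all three simultaneously. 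By the independence of the complex *integral of the choice of such a partition, proved in the preceding section, this common $\mathcal{P}$ may be used in (\ref{16}) for all three integrals.

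Next, for each $k$ I would pick chained branch systems $\{\mathcal{L}^f_k\}$ and $\{\mathcal{L}^g_k\}$ of $\log$ realizing the localized definitions of $\int_Cf(z)^{dz}$ and $\int_Cg(z)^{dz}$, so that $\mathcal{L}^f_{k+1}(f(z(t_k)))=\mathcal{L}^f_k(f(z(t_k)))$ and likewise for $g$. The functions $t\mapsto\mathcal{L}^f_k(f(z(t)))-\mathcal{L}^g_k(g(z(t)))$ then satisfy $e^{\mathcal{L}^f_k(f(z(t)))-\mathcal{L}^g_k(g(z(t)))}=h(z(t))$, and subtracting the two chaining conditions at each $t_k$ shows that these pieces glue into a single continuous determination $D$ of $\log h$ along all of $C$. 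Since $h(C_k)$ lies in an open sector of opening $\pi$ about the origin, where every continuous determination of $\log$ is the restriction of a branch of $\log$, one gets $D|_{C_k}=\mathcal{L}^h_k\circ h$ for branches $\mathcal{L}^h_k$ of $\log$, and the chaining condition for $\{\mathcal{L}^h_k\}$ is inherited from that for $D$. Thus $\{\mathcal{L}^h_k\}$ is an admissible branch system for computing $\int_Ch(z)^{dz}$ via (\ref{16}), and by additivity of the ordinary line integral in the integrand, $S_h:=\sum_k\int_{C_k}\mathcal{L}^h_k(h(z))\,dz=S_f-S_g$, where $S_f:=\sum_k\int_{C_k}\mathcal{L}^f_k(f(z))\,dz$ and $S_g$ is defined analogously.

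Finally I would substitute into (\ref{16}) and exponentiate: $I^*_n(h,C)=e^{2\pi n(z(b)-z(a))i}e^{S_f-S_g}$, whereas $I^*_n(f,C)\big/I^*_m(g,C)=e^{2\pi(n-m)(z(b)-z(a))i}e^{S_f-S_g}$. As $n$ runs over $\mathbb{Z}$ and as $(n,m)$ runs over $\mathbb{Z}^2$ the exponents $n$ and $n-m$ both sweep $\mathbb{Z}$, and the complex *integral as a set is unaffected by the particular (possibly non-default) starting branch used; hence $\{I^*_n(h,C):n\in\mathbb{Z}\}=\{I^*_n(f,C)/I^*_m(g,C):n,m\in\mathbb{Z}\}$, which is the asserted set equality $\int_C(f(z)/g(z))^{dz}=\int_Cf(z)^{dz}\big/\int_Cg(z)^{dz}$. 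The main obstacle — the only step beyond routine reindexing and additivity — is the verification that $\mathcal{L}^f_k\circ f-\mathcal{L}^g_k\circ g$ constitutes a genuine admissible branch system for $h$ along the entire curve: that it is continuous, chains correctly at the nodes $t_k$, and is of the required composition form $\mathcal{L}^h_k\circ h$ on each $C_k$; once this is in hand, the conclusion follows exactly as Theorem \ref{T6} follows from $\log(z_1z_2)=\log z_1+\log z_2$.
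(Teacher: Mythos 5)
Your proof is correct and takes essentially the same route as the paper: the paper's own proof is a single sentence asserting that the result follows from (\ref{16}) together with the set identity $\log(z_1/z_2)=\log z_1-\log z_2$, and your argument is exactly a careful unwinding of that claim --- a common partition via Lemma \ref{L1}, the difference $\mathcal{L}^f_k\circ f-\mathcal{L}^g_k\circ g$ serving as an admissible chained branch system for $h=f/g$, and the reindexing $\{n-m:n,m\in\mathbb{Z}\}=\mathbb{Z}$. The bookkeeping you make explicit (in particular that the piecewise differences glue continuously and are of the required form $\mathcal{L}^h_k\circ h$ on each subarc, and that the resulting value set is independent of the starting branch) is precisely what the paper leaves implicit, and it checks out.
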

 \begin{proof}
This follows from (\ref{16}) and the set equality  $\log (z_1/z_2)=\log z_1-\log z_2$, where the difference of the sets
$A$ and $B$ is treated as $A-B=\{ a-b: a\in A,\ b\in B\} $.
 \end{proof}
 \begin{theorem}[Reversing the curve]
 \label{T8}
Let $f$ be a nowhere-vanishing continuous functions, defined on an open connected set $D$, let $C=\{
z(t)=x(t)+iy(t):a\le t\le b\} $ be a piecewise smooth simple curve in $D$ and let $-C$ be the curve $C$ with opposite
orientation. Then
 \[
 \int _Cf(z)^{dz}=\Big( \int _{-C}f(z)^{dz}\Big) ^{-1}
 \]
in the sense that $I^*_n(f,C)=I^*_n(f,-C)^{-1}$, $n\in \mathbb{Z}$, where
$\mathcal{L}_{11}(f(z(a)))=\mathcal{L}_{2m_2}(f(z(a)))$ or $\mathcal{L}_{1m_1}(f(z(b)))=\mathcal{L}_{21}(f(z(b)))$ if
the sequences of branches of $\log $ used in the definitions of $I^*_0(f,C)$ and $I^*_0(f,-C)$ are
$\mathcal{L}_{11},\ldots ,\mathcal{L}_{1m_1}$ and $\mathcal{L}_{21},\ldots ,\mathcal{L}_{2m_2}$, respectively.
 \end{theorem}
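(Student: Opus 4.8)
The plan is to express both sides through the single scalar exponent appearing in (\ref{16}) and then to perform one change of variables; the only delicate point is the bookkeeping of branches. Let $w(t)=z(a+b-t)$, $a\le t\le b$, be the standard parametrization of $-C$, so $w(a)=z(b)$ and $w(b)=z(a)$. Recall that in the construction of $\int_Cf(z)^{dz}$ one builds a continuous single-valued function $g\colon[a,b]\to\mathbb{C}$ with $g(t)$ a branch value of $\log f(z(t))$, so that $g$ restricts to $\mathcal{L}_{1k}(f(z(\cdot)))$ on the $k$th partition interval; hence $\int_{C_k}\mathcal{L}_{1k}(f(z))\,dz=\int_{t_{k-1}}^{t_k}g(t)z'(t)\,dt$, and summing over $k$ turns (\ref{16}) into the partition-free formula
\[
I^*_n(f,C)=e^{2\pi n(z(b)-z(a))i}\exp\Big(\int_a^b g(t)z'(t)\,dt\Big),\qquad n\in\mathbb{Z}.
\]
Applying the same construction to $-C$ produces a continuous $\tilde g\colon[a,b]\to\mathbb{C}$ with $\tilde g(t)\in\log f(w(t))$ and $I^*_n(f,-C)=e^{2\pi n(w(b)-w(a))i}\exp\big(\int_a^b\tilde g(t)w'(t)\,dt\big)$.

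The key step is to identify $\tilde g$. Both $t\mapsto\tilde g(t)$ and $t\mapsto g(a+b-t)$ are continuous and take values in the fibre $\log f(w(t))=\log f(z(a+b-t))$, which is a coset of $2\pi i\mathbb{Z}$; hence their difference is a continuous $2\pi i\mathbb{Z}$-valued function on the connected interval $[a,b]$ and so equals a constant $2\pi i N$, $N\in\mathbb{Z}$. The hypothesis $\mathcal{L}_{11}(f(z(a)))=\mathcal{L}_{2m_2}(f(z(a)))$ says exactly $g(a)=\tilde g(b)$ (the branch at the start of $C$ evaluated at $z(a)$ equals the branch at the end of $-C$ evaluated at $w(b)=z(a)$), and $\tilde g(b)=g(a)+2\pi i N$ then forces $N=0$; the alternative hypothesis $\mathcal{L}_{1m_1}(f(z(b)))=\mathcal{L}_{21}(f(z(b)))$ reads $g(b)=\tilde g(a)$ and yields $N=0$ in the same way. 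Thus $\tilde g(t)=g(a+b-t)$ for all $t$.

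Finally, the substitution $s=a+b-t$, together with $w'(t)=-z'(a+b-t)$, gives
\[
\int_a^b\tilde g(t)w'(t)\,dt=-\int_a^b g(a+b-t)z'(a+b-t)\,dt=-\int_a^b g(s)z'(s)\,ds,
\]
so the exponential factor for $-C$ is the reciprocal of the one for $C$; combined with $w(b)-w(a)=z(a)-z(b)=-(z(b)-z(a))$, which reciprocates the factor $e^{2\pi n(z(b)-z(a))i}$, this gives $I^*_n(f,-C)=I^*_n(f,C)^{-1}$ for every $n\in\mathbb{Z}$, which is the assertion. I expect the middle paragraph to be the only genuine obstacle — establishing, via connectedness, that the single-endpoint matching hypothesis propagates to the pointwise identity $\tilde g\equiv g(a+b-\cdot)$ along the whole curve — whereas the change of variables and the handling of the $e^{2\pi n(\cdot)i}$ prefactor are routine.
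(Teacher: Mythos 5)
Your proposal is correct and follows essentially the route the paper intends: the paper's own proof is just the one-line assertion ``This follows from (\ref{16})'', and your argument is precisely the unwinding of that formula under the reversed parametrization. The extra content you supply --- the reduction to a single continuous logarithm $g$ along $C$, the connectedness argument showing that matching the branches at one endpoint forces $\tilde g\equiv g(a+b-\cdot)$ everywhere, and the change of variables --- is exactly the detail the paper omits, and it is carried out correctly.
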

 \begin{proof}
This follows from (\ref{16}).
 \end{proof}
 \begin{theorem}[Raising to a natural power]
 \label{T9}
Let $f$ be a nowhere-vanishing continuous function, defined on an open connected set $D$ and let $C=\{
z(t)=x(t)+iy(t):a\le t\le b\} $ be a piecewise smooth simple curve in $D$. Then for $n\in \mathbb{Z}$,
 \[
 \Big( \int _Cf(z)^{dz}\Big) ^n\subseteq \int _C(f(z)^n)^{dz}.
 \]
 \end{theorem}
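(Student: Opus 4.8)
The plan is to reduce both sides to the explicit description of the complex *integral furnished by $(\ref{16})$. Recall that for $f$ one fixes a partition $\mathcal{P}=\{t_0,\dots,t_m\}$ of $[a,b]$ as in Lemma \ref{L1} together with branches $\mathcal{L}_1,\dots,\mathcal{L}_m$ of $\log$ that patch into a single continuous function $h$ on $C$ with $e^{h(z(t))}=f(z(t))$; then $I^*_0(f,C)=e^{\int_C h(z)\,dz}$ and
\[
\int_C f(z)^{dz}=\big\{ e^{2\pi k(z(b)-z(a))i}\,I^*_0(f,C):k\in\mathbb{Z}\big\},
\]
this set being independent of all the choices by the discussion following $(\ref{16})$.

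First I would note that for $n\in\mathbb{Z}$ the function $f(z)^n$ is single-valued, continuous and nowhere-vanishing, so $\int_C(f(z)^n)^{dz}$ is defined. Next I would show that $nh$ serves as an admissible continuous logarithm of $f^n$ along $C$: it is continuous and $e^{n h(z(t))}=(e^{h(z(t))})^n=f(z(t))^n$, and, after passing to a common refinement of $\mathcal{P}$ with a partition of $[a,b]$ valid for $f^n$ (a refinement of a valid partition is again valid, and by partition-independence this changes nothing), on each subarc $nh$ agrees with a composition of a genuine branch of $\log$ with $f^n$, the matching condition at the partition points being inherited from that of the $\mathcal{L}_k$. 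Hence the recipe $(\ref{16})$ applied to $f^n$ with this logarithm gives $I^*_0(f^n,C)=e^{\int_C n h(z)\,dz}=e^{n\int_C h(z)\,dz}=I^*_0(f,C)^n$, so that
\[
\int_C (f(z)^n)^{dz}=\big\{ e^{2\pi m(z(b)-z(a))i}\,I^*_0(f,C)^n:m\in\mathbb{Z}\big\}.
\]

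Finally I would compare this with
\[
\Big(\int_C f(z)^{dz}\Big)^n=\big\{ I^*_k(f,C)^n:k\in\mathbb{Z}\big\}=\big\{ e^{2\pi kn(z(b)-z(a))i}\,I^*_0(f,C)^n:k\in\mathbb{Z}\big\}
\]
and invoke $\{kn:k\in\mathbb{Z}\}\subseteq\mathbb{Z}$ to conclude the asserted inclusion; it is in general proper, since $n\mathbb{Z}\subsetneq\mathbb{Z}$ when $|n|\ge 2$. For $n\ge 1$ there is a shortcut worth recording: iterating Theorem \ref{T6} shows that $\int_C(f(z)^n)^{dz}$ is the $n$-fold set product of $\int_C f(z)^{dz}$ with itself, and restricting all $n$ factors to be equal gives the inclusion at once; the cases $n\le 0$ then follow by combining this with Theorem \ref{T7} and the elementary evaluation of $\int_C 1^{dz}$.

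The one genuinely delicate point I anticipate is the verification that $nh$ qualifies as a legitimate logarithm in the sense of the construction in Section 7 — that is, that over a Lemma \ref{L1} partition for $f^n$ it is piecewise a composition of an honest branch of $\log$ with $f^n$ — because, as Remark \ref{R5} cautions, a continuous branch of $\log(f^n)$ along $C$ need not a priori be of that compositional form. Everything else is bookkeeping of the exponents in $(\ref{16})$.
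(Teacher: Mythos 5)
Your proposal is correct, but your primary route is not the one the paper takes. The paper's entire proof is what you call the shortcut: it iterates Theorem \ref{T6} to identify $\int _C(f(z)^n)^{dz}$ with the $n$-fold set product of $\int _Cf(z)^{dz}$ with itself and then invokes $A^n\subseteq AA\cdots A$; as written it covers only $n\ge 1$ and is silent on $n\le 0$, even though the statement is for $n\in \mathbb{Z}$. Your main argument instead unwinds definition (\ref{16}): you check that $nh$ is an admissible piecewise-branch logarithm of $f^n$ over a common refinement of Lemma \ref{L1} partitions for $f$ and $f^n$ (the one delicate point, which you correctly flag; it is settled because two continuous logarithms of the same nonvanishing function on a connected arc differ by a constant in $2\pi i\mathbb{Z}$, so on each subarc $nh$ really is a branch of $\log$ composed with $f^n$, and the matching at partition points is just continuity of $nh$), deduce $I^*_0(f^n,C)=I^*_0(f,C)^n$, and reduce the inclusion to $\{kn:k\in \mathbb{Z}\}\subseteq \mathbb{Z}$ at the level of the winding index. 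This buys a uniform treatment of all integers $n$, an explicit description of both sides, and a clear view of when the inclusion is strict --- though your parenthetical that it is ``in general proper'' for $|n|\ge 2$ needs the extra hypothesis that $m\mapsto e^{2\pi m(z(b)-z(a))i}$ does not collapse; for a closed curve both sides are the singleton $\{1\}$. The paper's argument is shorter but leans entirely on Theorem \ref{T6}; yours is self-contained modulo (\ref{16}) and actually proves the full statement.
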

 \begin{proof}
This follows from multiple application of Theorem \ref{T6} and the fact that $A^n\subseteq AA\cdots A$ ($n$ times),
where the set $A^n$ is treated as $A^n=\{ a^n: a\in A\} $, but $AA\cdots A(n\ \text{times})=\{ a_1a_2\cdots a_n:a_i\in
A, i=1,\ldots ,n\} $.
 \end{proof}
 \begin{theorem}[Fundamental theorem of calculus for complex *integrals]
 \label{T10}
Let $f$ be a nowhere-vanishing *holomorphic function, defined on an open connected set $D$, and let $C=\{
z(t)=x(t)+iy(t):a\le t\le b\} $ be a piecewise smooth simple curve in $D$. Then
 \begin{equation}
 \label{17}
 \int _Cf^*(z)^{dz}=\big\{ e^{2\pi n(z(b)-z(a))i}f(z(b))/f(z(a)),\ n\in \mathbb{Z}\big\} .
 \end{equation}
 \end{theorem}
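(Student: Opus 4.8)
The plan is to deduce the general statement from the local version already proved in Proposition~\ref{P3}: I would cut $C$ into short sub-arcs on which Proposition~\ref{P3} applies, evaluate $\int f^*(z)^{dz}$ on each sub-arc, and then reassemble the answer by the first multiplicative property (Theorem~\ref{T5}), watching the pieces telescope.

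First, since $f$ is *holomorphic on $D$ the function $f^*=e^{f'/f}$ is continuous, holomorphic and nowhere-vanishing on $D$, so $\int_Cf^*(z)^{dz}$ is defined. Apply Lemma~\ref{L1} once to $f$ and once to $f^*$, and take $a=t_0<t_1<\cdots<t_m=b$ to be a common refinement of the two resulting partitions; put $C_k=\{z(t):t_{k-1}\le t\le t_k\}$. Each $C_k$ is a piecewise smooth simple curve in $D$, and since a sub-arc inherits the half-plane property, both sets $\{f(z(t)):t\in[t_{k-1},t_k]\}$ and $\{f^*(z(t)):t\in[t_{k-1},t_k]\}$ lie in an open half plane bounded by a line through the origin. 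The first fact lets Proposition~\ref{P3} be applied on $C_k$; the second makes $\{t_k\}$ an admissible partition for the definition~(\ref{16}) of $\int_Cf^*(z)^{dz}$ (whose value is independent of the admissible partition chosen). Proposition~\ref{P3} then gives, for every $n\in\mathbb{Z}$,
\[
I^*_n(f^*,C_k)=e^{2\pi n(z(t_k)-z(t_{k-1}))i}\,\frac{f(z(t_k))}{f(z(t_{k-1}))}.
\]

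Now reassemble. Applying Theorem~\ref{T5} $m-1$ times yields $I^*_n(f^*,C)=\prod_{k=1}^mI^*_n(f^*,C_k)$ for each $n$, its branch-matching hypotheses being exactly what~(\ref{16}) arranges along $\{t_k\}$. Substituting the values above and telescoping both the exponent, $\sum_{k=1}^m(z(t_k)-z(t_{k-1}))=z(b)-z(a)$, and the product, $\prod_{k=1}^mf(z(t_k))/f(z(t_{k-1}))=f(z(b))/f(z(a))$, gives
\[
I^*_n(f^*,C)=e^{2\pi n(z(b)-z(a))i}\,\frac{f(z(b))}{f(z(a))},\qquad n\in\mathbb{Z},
\]
which is precisely (\ref{17}).

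The step I expect to demand the most care is the bookkeeping of branches: I must verify that $I^*_n(f^*,C)=\prod_kI^*_n(f^*,C_k)$ holds value-by-value, i.e. that no spurious factor $e^{2\pi p_k(z(t_k)-z(t_{k-1}))i}$ enters when the local normalization of $\log f^*$ used inside Proposition~\ref{P3} is matched at the junctions $z(t_k)$ with the globally continuous branches $\mathcal{L}_1,\dots,\mathcal{L}_m$ of~(\ref{16}). This should be automatic, because $\log f^*$ carries the single canonical continuous branch $z\mapsto f'(z)/f(z)$ on all of $D$ (the log-differentiation formula), which is simultaneously the normalization implicit in Proposition~\ref{P3} via Theorem~\ref{T1} and a legitimate global choice in~(\ref{16}); with it each $p_k=0$. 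Everything else — Lemma~\ref{L1}, Proposition~\ref{P2}/Theorem~\ref{T5}, Theorem~\ref{T3} inside Proposition~\ref{P3}, and the telescoping — is already available.
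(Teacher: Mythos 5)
Your proposal is correct and follows essentially the same route as the paper: partition $[a,b]$ via Lemma~\ref{L1}, apply the local fundamental theorem (Proposition~\ref{P3}) on each sub-arc $C_k$, reassemble with the first multiplicative property, and telescope. Your extra care — refining the partition so it is admissible for both $f$ and $f^*$, and checking that the canonical continuous branch $f'/f$ of $\log f^*$ makes the branch-matching automatic — is a welcome tightening of details the paper leaves implicit, but it does not change the argument.
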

 \begin{proof}
Let $\mathcal{P}=\{ t_0,t_1,\ldots ,t_m\} $ be a partition from Lemma \ref{L1} and let $C_k=\{ z(t):t_{k-1}\le t\le
t_k\} $. Then
 \[
 \int _Cf(z)^{dz}=\int _{C_1}f(z)^{dz}\cdots \int _{C_m}f(z)^{dz}.
 \]
Hence, by Proposition \ref{P3},
 \begin{align*}
 I^*_n(f,C)
  & =e^{2\pi n(z(t_1)-z(t_0))i+\cdots +2\pi  n(z(t_m)-z(t_{m-1}))i}\frac{f(z(t_1))\cdots f(z(t_m))}{f(z(t_0))\cdots  f(z(t_{m-1}))}\\
  & =e^{2\pi n(z(b)-z(a))i}\frac{f(z(b))}{f(z(a))},
 \end{align*}
proving the theorem.
 \end{proof}

This theorem demonstrates that $\int _Cf^*(z)^{dz}$ is independent of the shape of the piecewise smooth curve $C$, but
depends on its initial point $z(a)=x(a)+iy(a)$ and end point $z(b)=x(b)+iy(b)$ on the curve $C$. Therefore, this
integral can be denoted by
 \[
 \int _{z(a)}^{z(b)}f^*(z)^{dz}.
 \]
 \begin{corollary}
 \label{C1}
Let $f$ be a nowhere-vanishing *holomorphic function, defined on an open connected set $D$, and let $C=\{
z(t)=x(t)+iy(t):a\le t\le b\} $ be a piecewise smooth simple closed curve in $D$. Then
 \begin{equation}
 \label{18}
 \oint _Cf^*(z)^{dz}=1.
 \end{equation}
 \end{corollary}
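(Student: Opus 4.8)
The plan is to derive (\ref{18}) as an immediate specialization of Theorem \ref{T10}. Since $C$ is a piecewise smooth simple \emph{closed} curve, its initial and terminal points coincide, that is $z(a)=z(b)$. First I would substitute this into the right-hand side of (\ref{17}): the exponent $z(b)-z(a)$ equals $0$, so $e^{2\pi n(z(b)-z(a))i}=e^{0}=1$ for every $n\in\mathbb{Z}$, and likewise $f(z(b))/f(z(a))=f(z(a))/f(z(a))=1$. Hence the set appearing on the right of (\ref{17}) collapses to $\{1\}$, which is precisely the assertion $\oint_Cf^*(z)^{dz}=1$.

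The only point worth emphasizing is why this forces the complex *integral to be genuinely single-valued here. In general $\int_Cf(z)^{dz}$ carries a countable indeterminacy governed by the factor $e^{2\pi n(z(b)-z(a))i}$ and by the choices of branches $\mathcal{L}_k$ of $\log$ entering the definition (\ref{16}); for a closed curve the first factor is identically $1$, and the telescoping product of the $f(z(t_k))$-terms in the proof of Theorem \ref{T10} cancels regardless of those choices, so no ambiguity survives. I do not anticipate any real obstacle: the corollary is a direct reading of Theorem \ref{T10} once $z(a)=z(b)$ is invoked. It is, however, worth recording the contrast with ordinary complex calculus noted in the Introduction — the classical nonvanishing $\oint_C dz/z=2\pi i$ has no nonzero counterpart in the multiplicative setting precisely because $2\pi i$ enters only through $e^{2\pi i}=1$.
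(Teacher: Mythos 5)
Your proof is correct and follows exactly the paper's route: the corollary is obtained by substituting $z(a)=z(b)$ into the formula of Theorem \ref{T10}, which makes both the factor $e^{2\pi n(z(b)-z(a))i}$ and the quotient $f(z(b))/f(z(a))$ equal to $1$. Your additional remark on single-valuedness matches the paper's own comment following the corollary.
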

 \begin{proof}
Simply, write $z(a)=z(b)$ in (\ref{17}).
 \end{proof}

Note that in (\ref{18}) all the values of $\oint _Cf^*(z)^{dz}$ are equal to 1, i.e., $\oint _Cf^*(z)^{dz}$ becomes
single-valued.
 \begin{example}
 \label{E9}
{\rm By Example \ref{E2}, the function $f(z)=e^{cz}$, $z\in \mathbb{C}$, where $c=\mathrm{const}\in \mathbb{C}$, has the
*derivative $f^*(z)=e^c$. Respectively,
 \[
 \int _C(e^c)^{dz}=e^{2\pi n(z(b)-z(a))i}e^{c(z(b)-z(a))}=e^{(z(b)-z(a))(c+2\pi ni)},
 \]
where $C=\{ z(t):a\le t\le b\} $ is a piecewise smooth curve. }
 \end{example}
 \begin{example}
 \label{E10}
{\rm By Example \ref{E3}, the function $f(z)=e^{ce^z}$, $z\in \mathbb{C}$, where $c=\mathrm{const}\in \mathbb{C}$, has
the
*derivative $f^*(z)=f(z)$. Respectively,
 \[
 \int _C\big( e^{ce^z}\big) ^{dz}=e^{2\pi n(z(b)-z(a))i}e^{c\left( e^{z(b)}-e^{z(a)}\right) },
 \]
where again $C=\{ z(t):a\le t\le b\} $ is a piecewise smooth curve.
 }
 \end{example}
 \begin{example}
 \label{E11}
{\rm The analog of the integral
 \[
 \oint _{|z|=1}\frac{dz}{z}=2\pi i
 \]
in complex *calculus is
 \[
 \oint _{|z|=1}\big( e^{1/z}\big) ^{dz}.
 \]
Assuming that the orientation on the unit circle $|z|=1$ is positive, we informally have
 \[
 \oint _{|z|=1}\big( e^{1/z}\big) ^{dz}=e^{\oint _{|z|=1}\log e^{1/z}dz}=e^{\oint _{|z|=1}\big( \frac{1}{z}+2\pi ni\big)dz}
 =e^{2\pi n(z(b)-z(a))i}e^{2\pi i}=1.
 \]
Formally, we use Example \ref{E4} and calculate the same:
 \[
 \oint _{|z|=1}\big( e^{1/z}\big) ^{dz}=e^{2\pi n(z(b)-z(a))i}\frac{z(b)}{z(a)}=\frac{e^{\pi i}}{e^{-\pi i}}=e^{2\pi i}=1.
 \]
Thus, this example also fits to Corollary \ref{C1}. The main idea of this is that $e^0=e^{2\pi i}=1$ though $2\pi
i\not= 0$. In other words, the discontinuity of $\log $ on $(-\infty ,0]$, that creates the Cauchy formula $\int
_{|z|=1}\frac{dz}{z}=2\pi i$, appears in a smooth form in complex *calculus because now $\log z$ is replaced by
$e^{\log z}=z$, where the discontinuity of $\log $ is compensated by periodicity of the exponential function.
 }
 \end{example}


 \section{Conclusion}

In the paper the concepts of multiplicative derivative and multiplicative integral are extended to complex-valued
functions of complex variable. Some open questions from the real case are answered satisfactorily. Unlike the real
case, complex multiplicative calculus is found to be not so much parallel to ordinary complex calculus. In particular,
the complex multiplicative integral is defined as multiple values whereas the complex multiplicative derivative as a
single value. More importantly, the famous Cauchy formula $\oint \frac{dz}{z}=2\pi i$ of complex calculus produces no
effect in the multiplicative case. The reason is that in the multiplicative case the inequality $2\pi i\not= 0$ appears
as the equality $e^{2\pi i}=e^0=1$. This demonstrates the importance of further study in complex multiplicative
calculus. It is of a great importance the Cauchy's *theorem in general form and residue theory in multiplicative case.
We expect that it can be more natural tool for study of infinite products of complex functions.


 \section*{Acknowledgement}

The authors thank Prof. Sergey Khrushchev for useful discussions and suggestions.\\
This work is part of the B-type project MEKB-09-05.

  \end{document}